\numberwithin{table}{section}
\numberwithin{figure}{section}
\numberwithin{equation}{section}
\newtheorem{theorem}{Theorem}[section]
\newtheorem{lemma}[theorem]{Lemma}
\newtheorem{cor}[theorem]{Corollary}
\newtheorem{remark}[theorem]{Remark}
\newtheorem{conj}[theorem]{Conjecture}
\newcommand{\X}{\mathcal{X}}
\newcommand{\M}{\mathcal{M}}
\begin{document}
\title
{Conjugates of Pisot numbers \\}
\date{\today}
\author[Kevin G. Hare]{Kevin G. Hare}
\address{Department of Pure Mathematics, University of Waterloo, Waterloo, Ontario, Canada N2L 3G1}
\email{kghare@uwaterloo.ca}
\thanks{Research of K.G. Hare was supported by NSERC Grant 2019-03930}
\author[Nikita Sidorov]{Nikita Sidorov}
\address{Department of Mathematics, The University of Manchester, Manchester, M13 9PL, United Kingdom}
\address{and}
\address{Department of Pure Mathematics, University of Waterloo, Waterloo, Ontario, Canada N2L 3G1}
\email{sidorov@manchester.ac.uk}
\thanks{Research of N. Sidorov was supported in part by University of Waterloo}

\begin{abstract}
In this paper we investigate the Galois conjugates of a Pisot number $q \in (m, m+1)$, $m \geq 1$.
In particular, we conjecture that
    for $q \in (1,2)$ we have $|q'| \geq \frac{\sqrt{5}-1}{2}$ for all
    conjugates $q'$ of $q$.
Further, for $m \geq 3$, we conjecture that
    for all Pisot numbers $q \in (m, m+1)$
    we have $|q'| \geq \frac{m+1-\sqrt{m^2+2m-3}}{2}$.
A similar conjecture if made for $m =2$.
We conjecture that all of these bounds are tight.
We provide partial supporting evidence for this conjecture.
This evidence is both of a theoretical and computational nature.

Lastly, we connect this conjecture to a result on the dimension of Bernoulli
    convolutions parameterized by $\beta$,
    whose conjugate is the reciprocal of a Pisot number.
\end{abstract}

\subjclass[2010]{Primary 11K16}
\keywords{Pisot numbers, Bernoulli convolutions}

\maketitle

\section{Introduction}
In this paper we investigate the conjugates of a Pisot number $q \in (m, m+1)$.
In particular, we conjecture a tight value for $c_m > \frac{1}{m+1}$ such
    such that for all Pisot numbers $q \in (m, m+1)$ we have that
    all conjugates $|q'| \geq c_m$.
Precise values for a tight lower bound are given in Conjecture~\ref{conj:exact}.
Partial theoretical supporting evidence for these conjectures is given
    in Theorems~\ref{thm:complex}, \ref{thm:non-unit}, \ref{thm:parry}
    and \ref{thm:m=1}.
Computational supporting evidence is given in
    Theorems~\ref{thm:1.933}, \ref{thm:150}
    and Table~\ref{tab:counter}.

Recall that a {\em Pisot number} is a positive real algebaric integer all of whose other conjugates
    are strictly less than 1 in absolute value.
We say that a real number $\theta > 1$ is a {\em Parry number} if its gredy $\theta$-expansion is
    eventually periodic.
That is, if the orbit of $1$ under the transformation $x \mapsto x \theta \mod 1$ is finite.
It is clear that such $\theta$ are algebraic integers.
Every Pisot number is known to be a Parry number, see
    \cite{Bertrand77, Schmidt80}.
In \cite{So94} Solomyak looked at the conjugates of Parry numbers, and showed
    that they are always strictly less than $\frac{1+\sqrt{5}}{2}$, the golden
    ratio.
He also analyzed the domain of all such conjugates.

Let $P(x) = a (x-\alpha_1) \dots (x-\alpha_d) \in \mathbb{Z}[x]$.
Recall that the Mahler measure of a polynomial is defined as
    \[ M(P) = M(a (x-\alpha_1) \dots (x-\alpha_d)) = |a| \prod \max(1, |\alpha_i|). \]
We define the Mahler measure of an algebraic number as the Mahler measure of it's minimal
    polynomial.
It is clear that the Mahler measure of a Pisot number is the Pisot number.

In \cite{Du11}, Dubickas considered integer polynomials $P$ with no reciprocal
    factor and non-zero constant term.
In this paper they gave a lower bound for the roots of $P$ in terms of the
    Mahler measure.
In particular, it was shown that the conjugates were bounded below by
   \[ M(P)^{-1} + 2^{-2k-7} M(P)^{-3} (M(P)^2 \log(M(P)))^{-(k+1)/(d-k-1)} \]
where $P$ is of degree $d$ and $1 \leq k \leq d-2$ is the number of roots outside the unit circle.
In the special case where $P$ is the minimal polynomial of a Pisot number $q$ of degree $d \geq 4$,
    a tighter bound is given.
The conjugates $q'$ of the Pisot number satisfy
    \[ |q'| \geq q^{-1} +2^{-5} q^{-3-4/(d-2)} (\log(q))^{-2/(d-2)} \]
In the special case where $q$ is a cubic Pisot number, it is shown that
    $|q'| \geq q^{-1} +1.999 q^{-2}$.

Consider a Pisot number in $(m, m+1)$.
All of its conjugates satisfy
    $|q'| \geq 1/(m+1)$ as the product of the conjugates is the norm
    of the Pisot number, and is a non-zero integer.
From \cite{Du11} above, we have a slightly stronger bound, namely that for
    for $q \in (m, m+1)$ and $d \geq 4$ that $|q'| \geq \frac{1}{m+1} + \frac{1}{32 (m+1)^3}$.
Similarly, for $q \in (m, m+1)$ and $d = 3$ we have
    $|q'| \geq \frac{1}{m+1} +\frac{1.999}{(m+1)^{2}}$.

Salem proved that the set of Pisot numbers is closed \cite{Cassels}.
It is easy to show that this is not the case for the conjugates of Pisot
    numbers.
(In particular $1$ is a limit point of the conjugates of Pisot numbers, but
    $1$ is not a conjugate of a Pisot number.)
An interesting question that we investigate is, what the limit points of the
    conjugates of Pisot numbers look like.
For example, we know that for $q \in (m, m+1)$ the conjugates
    are strictly greater than $1/(m+1)$ in absolute value (and in fact bounded away
    from Dubickas).
Can we find a stronger bound for the conjugates?
Is there structure to the limit points of the conjugates?
See Figure~\ref{fig:conj} and \ref{fig:conjF} for the conjugates of Pisot numbers $q \in (1,2)$ for degree at most 30 and 35 respectively.

\begin{figure}
\centering \scalebox{0.35}
{\includegraphics{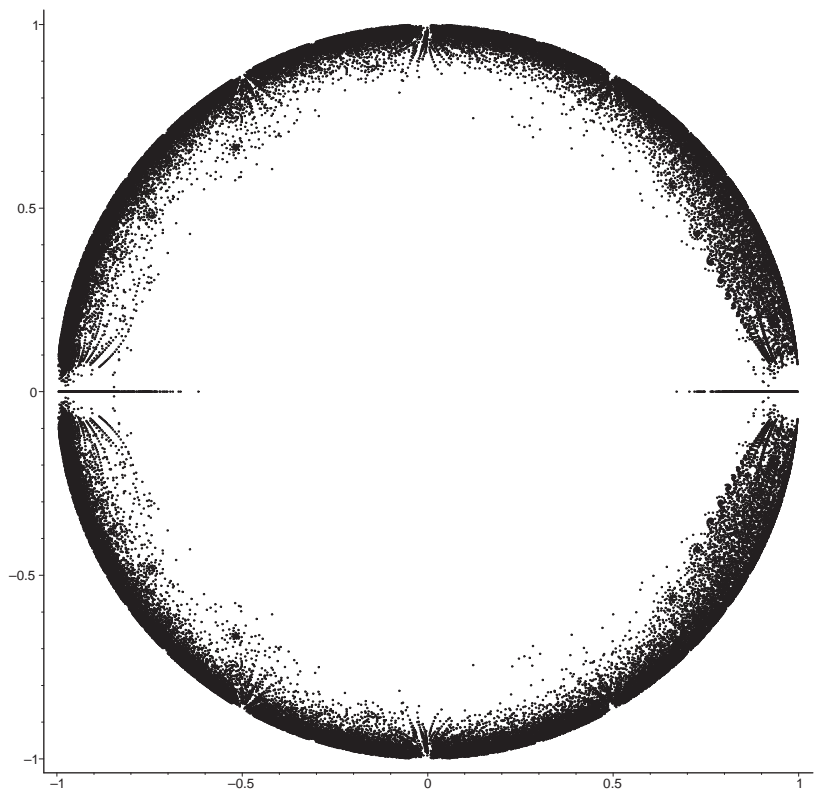}}
\caption{Conjugates of Pisot numbers $q \in (1,2)$ of degree at most 30}
\label{fig:conj}
\end{figure}

\begin{figure}
\centering \scalebox{0.35}
{\includegraphics{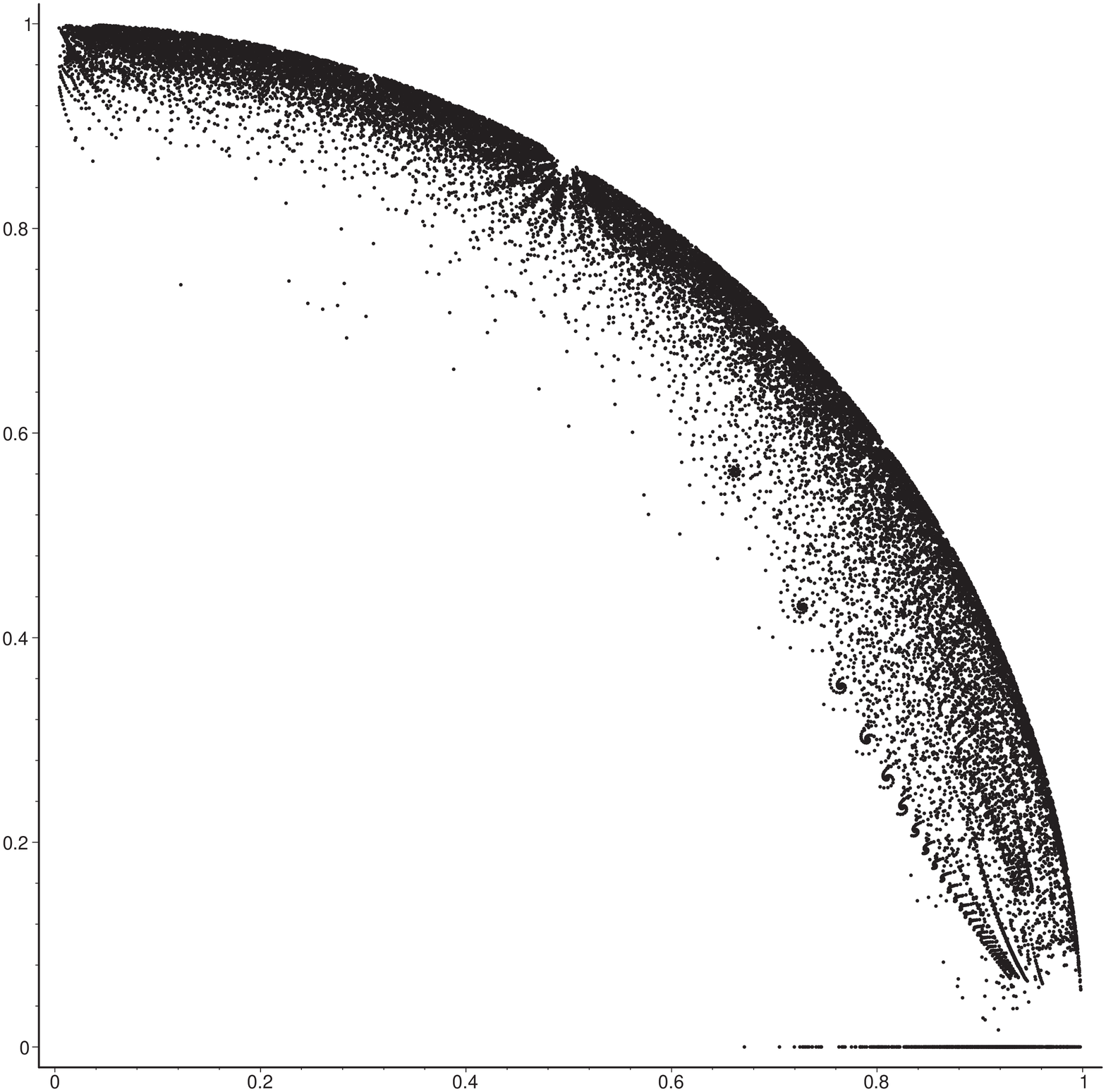}}
\caption{Conjugates of Pisot numbers $q \in (1,2)$ of degree at most 35
    in the first quadrant}
\label{fig:conjF}
\end{figure}

\begin{conj}
\label{conj:exact}
\begin{itemize}
\item
For all Pisot numbers $q \in (1,2)$, we have that
    $|q'| \geq \frac{\sqrt{5}-1}{2}$ for all conjugates $q'$ of $q$.
\item
For all Pisot numbers $q \in (2,3)$, we have that
    $|q'| \geq c_2$ for all conjugates $q'$ of $q$.
Here $c_2$ is the absolute value of the root
    of minimal modulus of $x^4-3 x^3+x^2-2x-1$.
\item
For all $m \geq 3$ and
    all Pisot numbers $q \in (m,m+1)$, we have that
    $|q'| \geq \frac{m+1-\sqrt{m^2+2m-3}}{2}$ for all conjugates $q'$ of $q$.
\end{itemize}
\end{conj}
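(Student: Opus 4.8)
Since the final statement is a conjecture supported only by partial evidence, the plan is to prove it in regimes and assemble the pieces, reducing throughout to the combinatorics of the (eventually periodic) $q$-expansion of $1$ and then to an analytic statement about digit-bounded power series, while treating small degrees, non-units and complex conjugates separately. First I would record the algebraic identity underlying the conjectured constants: $\frac{m+1-\sqrt{m^2+2m-3}}{2}$ is precisely the smaller root of $x^2-(m+1)x+1$, whose larger root $\beta_m=\frac{m+1+\sqrt{m^2+2m-3}}{2}$ is a quadratic Pisot number lying in $(m,m+1)$ for $m\ge 2$, with conjugate of modulus equal to the bound. Thus for $m\ge 3$ the extremal configuration is the concrete quadratic Pisot number $\beta_m$, while for $m=2$ the conjectured extremal is instead a quartic (the root giving $c_2$); in every case the content of the conjecture is that no Pisot number in $(m,m+1)$ has a conjugate closer to $0$. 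For $m=1$ the quadratic $x^2-2x+1$ degenerates, and the extremal example is the golden ratio, root of $x^2-x-1$, whose conjugate has modulus $\frac{\sqrt5-1}{2}$; this is why the $m=1$ bound is handled on its own in Theorem~\ref{thm:m=1}.

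Since every Pisot number is a Parry number \cite{Bertrand77,Schmidt80}, I would write $1=\sum_{i\ge1}d_iq^{-i}$ with $0\le d_i\le m$, $d_1=m$, the string $(d_i)$ eventually periodic and subject to Parry's admissibility (lexicographic) conditions. Setting $g(z)=1-\sum_{i\ge1}d_iz^i$, the point $z=1/q$ is a zero of $g$, and — when $q$ is a unit — clearing denominators shows the reciprocals $1/q'$ of the conjugates are among the zeros of the resulting integer polynomial. The analytic heart of the argument, following Solomyak \cite{So94}, is then to locate a zero-free region: one wants $g(z)\ne 0$ for $0<|z|<\rho_m$, where $\rho_m$ is the conjectured bound. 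The naive estimate $|g(z)|\ge 1-m\sum_{i\ge1}|z|^i=1-\frac{m|z|}{1-|z|}$ only yields zero-freeness for $|z|<\frac1{m+1}$, recovering the trivial norm bound and the Dubickas improvement \cite{Du11}. To reach the sharp threshold $\rho_m$, I would exploit $d_1=m$ together with the admissibility constraints on the tail $(d_2,d_3,\dots)$, which forbid the digit strings that would push a zero inside the disk of radius $\rho_m$; the relevant extremal string is exactly the one realising $\beta_m$.

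The remaining cases I would dispatch as follows. Complex conjugates should be treatable by the same zero-free-domain method, but using the two-dimensional shape of Solomyak's domain, giving Theorem~\ref{thm:complex}; here the estimate is genuinely easier off the real axis, so I expect the binding case to be real conjugates. Non-unit Pisot numbers break the clean correspondence between conjugates of $q$ and reciprocals of conjugates of $q$, so there I would argue directly from the minimal polynomial and the norm, bounding the smallest conjugate using that the remaining conjugates lie in the unit disk (Theorem~\ref{thm:non-unit}). Low-degree cases — cubic and quartic, and in particular the $m=2$ quartic defining $c_2$ — I would verify by the explicit enumeration already underlying Theorems~\ref{thm:1.933} and \ref{thm:150}.

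The hard part will be establishing \emph{tightness} of the constant uniformly in the degree $d$. A disk-type (radial) zero-free estimate cannot be sharp, because the extremal quadratic and quartic examples sit on the curved boundary of Solomyak's domain rather than on a circle; capturing this requires controlling $|g(z)|$ from below on an annular neighbourhood of $\rho_m$ across \emph{all} admissible, arbitrarily long periodic digit strings at once. The partial theorems achieve this only piecewise (by degree, by unit versus non-unit, by real versus complex), and I do not see how to remove the degree dependence without a new uniform argument; the most delicate regime is that of real conjugates near the threshold for small $m$, especially $m=2$, where $\rho_m$ is a quartic irrationality and the absence of monotonicity in the digit string makes a clean comparison with the extremal example elusive.
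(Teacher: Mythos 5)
The statement you set out to prove is a conjecture: the paper does not prove it either, and your proposal, by your own admission at the end, does not close it. The peripheral pieces of your plan do line up with the paper's partial results: the norm argument for non-unit Pisot numbers is exactly Theorem~\ref{thm:non-unit}, the complex-conjugate pairing is (in simpler form) Theorem~\ref{thm:complex}, the digit-expansion argument corresponds to Theorem~\ref{thm:parry}, and the finite verifications are Theorems~\ref{thm:1.933} and~\ref{thm:150}. But the central analytic device you propose would fail, and the place it fails is worth naming concretely.

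Your plan rests on a Solomyak-type zero-free region for $g(z)=1-\sum_{i\ge1}d_iz^i$ in a disk $0<|z|<\rho_m$. This controls the wrong roots. The zeros of $g$ are the \emph{reciprocals} of the roots of the Parry polynomial, so a zero-free disk around the origin for $g$ bounds conjugates of $q$ from \emph{above}; this is exactly how Solomyak proves that conjugates of Parry numbers have modulus less than $\frac{1+\sqrt5}{2}$. The conjugates you need to control here are the Pisot conjugates, which lie \emph{inside} the unit circle, so their reciprocals $1/q'$ lie outside the disk of convergence of the power series $g$; no estimate of $|g|$ on $|z|<\rho_m$, however sharp, says anything about them. (Relatedly, your remark that the naive estimate ``recovers the trivial norm bound'' is off: zero-freeness of $g$ on $|z|<\frac{1}{m+1}$ only recovers an upper bound on the roots of the Parry polynomial, essentially $q<m+1$.) To bound $|q'|$ from below one must instead control the reversed polynomial $x^n-d_1x^{n-1}-\dots-d_n$ near the origin, where the digits occur in \emph{reversed} order; Parry's admissibility conditions are lexicographic conditions on $(d_1,d_2,\dots)$ read forwards and give no uniform structure on the reversed string as $n\to\infty$, which is precisely why the problem is hard and remains open. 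The paper's own evidence avoids this route entirely: for simple Parry numbers it uses an elementary alternating-sum estimate on the real conjugate (Theorem~\ref{thm:parry}, giving the weaker but sufficient bound $\frac{\sqrt{m^2+4}-m}{2}$, attained by $x^2-mx-1$), and its main theoretical result for $m=1$ (Theorem~\ref{thm:m=1}) goes through Amara's classification of regular Pisot numbers into the families $f(x)x^n\pm g(x)$, with sign checks at $\pm c_1$ via Lemmas~\ref{lem:suff large} and~\ref{lem:suff large 2}, plus Boyd's algorithm for the irregular ones --- machinery that is absent from your plan.
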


For ease of notation,
\begin{enumerate}
\item Set $c_1 = \frac{\sqrt{5}-1}{2}$,
    the absolute value of the root of minimal modulus of $x^2-x-1$,
\item Set
    $c_2$ as the absolute value of the root
    of minimal modulus of $x^4-3 x^3+x^2-2x-1$, and
\item For $m \geq 3$ set
    $c_m = \frac{m+1-\sqrt{m^2+2m-3}}{2}$ is the root of minimal
    modulus of $x^2-(m+1) x +1$, for $m \geq 3$.
\end{enumerate}

It is easy to observe that all $c_m$, $m \geq 1$ are absolute values of
    conjugates of Pisot numbers.
It should be noted that the bound given in Conjecture \ref{conj:exact} on the location of the conjugates
    is significantly stronger than that known bound given by \cite{Du11}.

Amara gave a complete description of the limit points of Pisot numbers in $(1,2)$ in
    \cite{Amara66} (see Theorem~\ref{Amara}).
We will denote these limit points as $\phi_r,\psi_r$ and $\chi$, where $r = 1, 2, 3, \dots$.
Furthermore, a description of the sequence of Pisot numbers approaching
    each of $\phi_r, \psi_r$ or $\chi$ was also given.
The Pisot numbers in these sequences, together with there limit points,
    are called {\em regular Pisot numbers}.
It was shown that the only Pisot numbers sufficiently close to these limit points are
    regular Pisot numbers.
Furthermore, for any $\varepsilon > 0$, Amara showed that there are only a
finite number of Pisot numbers in $(1, 2-\varepsilon)$, that are not in one
of these sequences.
These are called {\em irregular Pisot numbers}.
Unfortunatley, although there are only a finite number of irregular Pisot
    numbers in $(1, 2-\varepsilon)$ for any $\varepsilon > 0$,
    this is not known to be true for the interval $(1,2)$.
This is the main issue with proving Conjecture~\ref{conj:exact} in full
    generality for $q \in (1,2)$.

In Section~\ref{sec:general} we provide partial results to this conjecture for general $m$.
Namely, we show that if $q \in (m, m+1)$ is a Pisot number and either $q$ or its conjugate
    $q'$ satisfying certain properties, then $q$ satisfies Conjecture~\ref{conj:exact}.
Sections~\ref{sec:regular 1} and \ref{sec:regular 2} consider the special case of regular Pisot numbers.
In particular we show that Conjecture~\ref{conj:exact} holds for regular Pisot numbers $q \in (1,2)$.
In addition, in Section~\ref{sec:regular 2} we  further investigate the limiting behaviour of the conjugates of regular Pisot numbers.

Computational evidence for Conjecture~\ref{conj:exact} is provided in Section~\ref{sec:m}.

This investigation was motivated by Conjecture~3.5 of \cite{HKPS},
    as it relates to the dimension of Bernoulli convolutions for a special
    family of algebraic integers.
We will discuss this more in Section~\ref{sec:Garsia}.

\section{General results}
\label{sec:general}

\begin{theorem}
\label{thm:complex}
Let $q \in (m,m+1)$ be a Pisot number.
There are no non-real complex conjugates $|q'| < \frac{1}{m+1/2}$.
There is at most one real conjugate $|q'| < \frac{1}{m+1/2}$.
\end{theorem}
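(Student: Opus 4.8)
The plan is to unify both assertions into the single claim that the minimal polynomial of $q$ cannot have two distinct conjugates $\alpha,\beta$ (both different from $q$) with $|\alpha|,|\beta| < \frac{1}{m+1/2}$. This immediately yields both statements: a non-real conjugate $q'$ with $|q'| < \frac{1}{m+1/2}$ forces its complex conjugate $\overline{q'} \neq q'$ to be a second conjugate of the same (small) modulus, and two distinct real conjugates below the bound are exactly the forbidden configuration. So proving the unified claim shows there are no such non-real conjugates and at most one such real conjugate.

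To establish the unified claim I would argue by contradiction using the norm. Write $P(x) = \prod_{i=1}^{d}(x-q_i)$ for the minimal polynomial of $q$, with $q_1 = q$ and $q_2,\dots,q_d$ its conjugates. Since $q>1$ is an algebraic integer, $P$ is monic with integer coefficients and $P(0)\neq 0$ (otherwise $P(x)=x$, which has root $0\neq q$). Hence $N := \prod_{i=1}^{d} q_i = (-1)^{d}P(0)$ is a nonzero integer and $|N| = |P(0)| \geq 1$. Suppose now $\alpha = q_2$ and $\beta = q_3$ satisfy $|\alpha|,|\beta| < \frac{1}{m+1/2}$. Every remaining conjugate $q_i$ with $i \geq 4$ has $|q_i| < 1$, so $\prod_{i\geq 4}|q_i| \leq 1$, the empty product (when $d=3$) being $1$. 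Using $q < m+1$ this gives $1 \leq |N| = q\,|\alpha|\,|\beta|\prod_{i\geq 4}|q_i| < (m+1)\bigl(\tfrac{1}{m+1/2}\bigr)^{2}$.

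The proof then closes on the elementary inequality $(m+1)/(m+1/2)^{2} < 1$, equivalently $4(m+1) < (2m+1)^{2}$, i.e. $4m^{2}-3 > 0$, which holds for every integer $m \geq 1$; this contradicts $|N|\geq 1$. The argument is short and uses nothing beyond the integrality of the norm and the defining bound $|q_i|<1$ on the conjugates, so there is no serious obstacle — the only points demanding care are bookkeeping: confirming $|N|\geq 1$ (hence that $0$ is not a conjugate), and correctly treating the boundary case $d=3$, where there are no further conjugates and the complementary product is empty. In particular the Parry and Amara structure discussed above is not needed for this result.
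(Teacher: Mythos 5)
Your proof is correct and follows essentially the same route as the paper's: both rest on the integrality of the norm (the constant term of the minimal polynomial, nonzero since $q\neq 0$) combined with the fact that non-real conjugates come in complex-conjugate pairs, so that two conjugates of modulus below the threshold force the norm below $1$ in absolute value. The only difference is cosmetic: the paper first derives the slightly stronger intermediate bound $|q'| > 1/\sqrt{m+1}$ and then checks $1/\sqrt{m+1} > \frac{1}{m+1/2}$, whereas you work directly with the stated bound via the inequality $4(m+1) < (2m+1)^2$.
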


\begin{proof}
We see that non-real conjugates come in complex conjugate pairs.
Hence for a non-real complex number we have
    $|q'| > 1/\sqrt{m+1} > \frac{1}{m+1/2}$.

The second observation to make is that we can have at most one real conjugate less than $1/\sqrt{m+1} > \frac{1}{m+1/2}$.
If we were to have more than this, then the constant term of the minimal
    polynomial would be less than 1 in absolute value.
\end{proof}

\begin {lemma}
We have $c_m < \frac{1}{m+1/2}$ for all $m$.
\end{lemma}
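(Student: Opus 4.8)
The plan is to treat the three clauses in the definition of $c_m$ separately, since in each case the target inequality $c_m < \frac{1}{m+1/2} = \frac{2}{2m+1}$ reduces to an elementary estimate.

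For $m = 1$ I would argue directly: $c_1 = \frac{\sqrt5-1}{2} < \frac23$ is equivalent to $\sqrt5 < \frac73$, i.e.\ to $5 < \frac{49}{9}$, which holds.

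For $m \geq 3$, where $c_m$ is the smaller root of $g(x) := x^2 - (m+1)x + 1$, I would exploit the sign of $g$. Since $g$ is an upward-opening parabola with positive roots $c_m < 1/c_m$, one has $c_m < t$ for a given $t$ precisely when $g(t) < 0$ (equivalently, $t$ lies strictly between the two roots). Taking $t = \frac{2}{2m+1}$ and clearing denominators, the key computation is
\[ (2m+1)^2\, g\!\left(\tfrac{2}{2m+1}\right) = 4 - 2(m+1)(2m+1) + (2m+1)^2 = 3 - 2m, \]
which is negative for every $m \geq 2$. Hence $g(t) < 0$, so $t$ lies between $c_m$ and $1/c_m$ and in particular $c_m < t$, as required. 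I note that this computation in fact also covers $m = 2$, although for $m = 2$ the relevant $c_2$ is defined by a different polynomial, so that case must be handled on its own.

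The remaining and slightly more delicate case is $m = 2$, where $c_2$ is the modulus of the root of minimal modulus of the quartic $P(x) = x^4 - 3x^3 + x^2 - 2x - 1$, and I must show $c_2 < \frac25$. Rather than locating $c_2$ exactly, I would exhibit a single root of $P$ of modulus smaller than $\frac25$; since $c_2$ is the minimal modulus among all roots of $P$, this is enough. Concretely, I would check that $P(-\tfrac25) > 0$ while $P(0) = -1 < 0$, so that by the intermediate value theorem $P$ has a real root in $(-\tfrac25, 0)$, whose absolute value is therefore strictly less than $\frac25$. It follows that $c_2 < \frac25$. The main obstacle here is only presentational: the $m=2$ case cannot be reduced to a closed-form quadratic root and instead requires this indirect ``a root exists in the interval'' argument, whereas the other two cases are one-line verifications.
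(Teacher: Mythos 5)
Your proof is correct and follows essentially the same route as the paper's: a case split according to the three-part definition of $c_m$, with an explicit check for $m=1,2$ and a one-line algebraic verification for $m\ge 3$. The only cosmetic differences are that for $m\ge 3$ you test the sign of $x^2-(m+1)x+1$ at $\frac{2}{2m+1}$ (obtaining $3-2m<0$), whereas the paper rationalizes $c_m = \frac{2}{m+1+\sqrt{m^2+2m-3}}$ and notes that $m+1+\sqrt{m^2+2m-3}>2m+1$, and for $m=2$ your intermediate-value argument locating a root of $x^4-3x^3+x^2-2x-1$ in $(-\frac{2}{5},0)$ makes rigorous what the paper dismisses as ``a direct check.''
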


\begin{proof}
For $m=1,2$ this is a direct check. For $m\ge3$, the required inequality is equivalent to $2m+1<m+1+\sqrt{m^2+2m-3}$, which holds trivially.
\end{proof}

\begin{theorem}
\label{thm:non-unit}
Conjecture~\ref{conj:exact} is true for all non-unit Pisot numbers.
More precisely, if $q'$ is a conjugate of a non-unit Pisot number $q \in (m, m+1)$,
    then $|q'| \geq \frac{2}{m+1} \geq c_m$.
\end{theorem}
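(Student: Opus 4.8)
The plan is to exploit the one feature that distinguishes a non-unit from a general Pisot number: the absolute value of its norm is at least $2$. Write $P(x)=x^d+\dots+a_0$ for the minimal polynomial of $q$, with roots $q=\alpha_1>1$ and conjugates $\alpha_2,\dots,\alpha_d$, each of modulus strictly less than $1$ because $q$ is Pisot. The norm $N(q)=\prod_{i=1}^d\alpha_i=(-1)^d a_0$ is a nonzero rational integer, and since $q$ is not a unit we have $|N(q)|=|a_0|\ge 2$.

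Next I would fix an arbitrary conjugate $q'=\alpha_j$ with $j\ge 2$ and isolate it inside the norm product, writing
\[ |N(q)| = q\,|q'|\prod_{i\neq 1,j}|\alpha_i|. \]
The key step is that every remaining factor satisfies $|\alpha_i|<1$, so the trailing product is at most $1$ (it is empty, and equal to $1$, only in the degenerate case $d=2$). Hence $2\le |N(q)|\le q\,|q'|$, which rearranges to $|q'|\ge 2/q$. Because $q<m+1$, this already gives $|q'|>\frac{2}{m+1}$ for every conjugate $q'$, which is the first assertion.

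It then remains to confirm the comparison $\frac{2}{m+1}\ge c_m$. For $m=1,2$ I would check this numerically against the stated values of $c_1=\frac{\sqrt5-1}{2}$ and $c_2$. For $m\ge 3$, recall that $c_m$ is the smaller root of $x^2-(m+1)x+1$; substituting $x=\frac{2}{m+1}$ into this quadratic yields $\frac{4}{(m+1)^2}-1$, which is negative for $m\ge 3$, placing $\frac{2}{m+1}$ strictly between the two roots and hence strictly above the smaller one $c_m$. Chaining the two inequalities completes the argument.

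I do not expect a genuine obstacle here: the heart of the matter is a one-line norm estimate, and the result holds uniformly across all degrees and all $m$. The only points requiring care are the empty-product edge case $d=2$ (where one obtains $|q'|=|N(q)|/q\ge 2/q$ directly, rather than through a strict inequality) and the routine, case-dependent verification of $\frac{2}{m+1}\ge c_m$ across the three ranges of $m$.
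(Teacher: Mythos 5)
Your proposal is correct and follows essentially the same route as the paper: the norm of a non-unit Pisot number has absolute value at least $2$, and since all conjugates other than $q$ and $q'$ lie strictly inside the unit circle, $q\,|q'|\ge 2$, giving $|q'|\ge 2/q \ge 2/(m+1)$. The only cosmetic difference is in verifying $\frac{2}{m+1}\ge c_m$: you substitute $x=\frac{2}{m+1}$ directly into $x^2-(m+1)x+1$, whereas the paper chains through its separate lemma $c_m<\frac{1}{m+1/2}$ together with $\frac{2}{m+1}\ge\frac{1}{m+1/2}$; both are routine and equally valid.
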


\begin{proof}
Let $q'$ be a conjugate of $q$.
As $q$ is a non-unit, we notice that
    $|q' q| \geq 2$.
This implies that $|q'| \geq 2/q \geq 2/(m+1) \geq \frac{1}{m+1/2} \geq c_m$.
Hence Conjecture~\ref{conj:exact} holds.
\end{proof}

Recall that $q$ is a {\em simple Parry number} if the greedy-expansion of $1$ is finite.

\begin{theorem}
\label{thm:parry}
Conjecture~\ref{conj:exact} is true for all Pisot numbers that are also
    simple Parry numbers.
More precisely, if $q'$ is a real Galois conjugate of a simple Parry number and Pisot number $q \in (m, m+1)$, then
\[
|q'|\ge \frac{\sqrt{m^2+4}-m}2 \ge c_m.
\]
\end{theorem}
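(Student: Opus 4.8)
The plan is to work with the Parry polynomial coming from the finite greedy expansion of $1$. Since $q$ is a simple Parry number in $(m,m+1)$, we have $1 = \sum_{i=1}^n a_i q^{-i}$ with first digit $a_1 = \lfloor q\rfloor = m$, all digits $0 \le a_i \le m$, and $a_n \ge 1$; hence $q$, and therefore every conjugate $q'$ (being a root of the minimal polynomial, which divides it), is a root of $P(x) = x^n - a_1 x^{n-1} - \cdots - a_n$. Writing $\rho = \frac{\sqrt{m^2+4}-m}{2}$, I would first record the two algebraic facts I intend to lean on: $\rho$ is the positive root of $x^2+mx-1$, so $\rho^2 + m\rho = 1$, and consequently $R := 1/\rho = m + \rho$ is the positive root of $x^2 - mx - 1$. (This last polynomial is the minimal polynomial of the extremal Pisot number, whose conjugate is exactly $-\rho$, which is why the bound cannot be improved.) Since a conjugate of a Pisot number lies in the open unit disc, it suffices to show $P$ has no real root in $(-\rho,\rho)$.

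The positive side is immediate: for $x\in(0,1)$ every term $mx^{n-1}, a_2x^{n-2}, \dots, a_n$ is nonnegative and $a_n\ge 1$, so $P(x) = x^n - (\text{nonnegative}) \le x^n - 1 < 0$; thus $P$ has no root in $(0,1)\supset(0,\rho)$, and $x=0$ is excluded because $a_n\ne 0$. For the negative side I would attach to a putative root $q'=-t$, $t\in(0,\rho)$, the remainder sequence $s_0 = 1$, $s_k = q' s_{k-1} - a_k$. A telescoping check gives $s_k = q'^k - (a_1 q'^{k-1} + \cdots + a_k)$, so $s_n = P(q')$; proving $-t$ is not a root is the same as proving $s_n \ne 0$.

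The heart of the argument is a two-sided invariant which I would prove by induction: $-R < s_k \le 1$ for all $k$. The base case is $s_0 = 1$, while $s_1 = -t-m$ exceeds $-R$ precisely because $t < \rho = R-m$. For the inductive step, assume $-R < s_{k-1}\le 1$. Using $a_k\ge 0$ and $s_{k-1} > -R$ gives the upper bound $s_k \le -t s_{k-1} < tR < \rho R = 1$; using $a_k \le m$ and $s_{k-1}\le 1$ gives the lower bound $s_k \ge -t s_{k-1} - m \ge -t-m > -R$, again by $t<\rho$. Hence $-R < s_k \le 1$ throughout. But $s_n = 0$ would force $s_{n-1} = -a_n/t$, and since $a_n\ge 1$ and $t<\rho = 1/R$ this means $s_{n-1} \le -1/t < -R$, contradicting the invariant. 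Therefore $s_n\ne 0$, no conjugate lies in $(-\rho,0)$, and $|q'|\ge\rho$.

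Finally I would dispatch the comparison $\rho \ge c_m$: for $m=1$ it is an equality, for $m=2$ a direct numerical check against the minimal-modulus root of $x^4-3x^3+x^2-2x-1$, and for $m\ge 3$ it reduces, after clearing the radicals, to $\sqrt{m^2+4} + \sqrt{m^2+2m-3} \ge 2m+1$, which follows by squaring. The one genuinely delicate point is the discovery and verification of the invariant in the third paragraph: the lower bound on $s_k$ needs an upper bound on $s_{k-1}$ and vice versa, so the two estimates must be run simultaneously, and it is exactly the hypothesis $t<\rho$ (equivalently $m+t<R$) that makes both ends close. I expect the full lexicographic Parry admissibility condition to be unnecessary here — only the digit bounds $0\le a_i\le m$, $a_1 = m$, and $a_n\ge 1$ are used, which is what keeps the argument clean.
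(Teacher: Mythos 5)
Your proof is correct, and while it shares the paper's overall frame --- pass to the Parry polynomial $x^n-a_1x^{n-1}-\cdots-a_n$ coming from the finite greedy expansion, use only the digit bounds $0\le a_i\le m$, dispose of positive roots in $(0,1)$ by a sign argument, and recognize the root of $x^2-mx-1$ as the critical threshold --- the core estimate is carried out by a genuinely different mechanism. The paper substitutes $\beta=-1/q'$ and makes a one-shot comparison: the resulting alternating sum is bounded by the geometric series
\[
m\left(\beta^{n-1}+\beta^{n-3}+\cdots+1\right)=m\,\frac{\beta^{n+1}-1}{\beta^2-1},
\]
which is strictly less than $\beta^n$ as soon as $\beta^2-m\beta-1>0$, i.e.\ $\beta>R=\frac{m+\sqrt{m^2+4}}{2}$, giving the contradiction; this requires a parity split ($n$ odd versus $n$ even, with the even case only sketched). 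You instead run an induction on the remainder sequence $s_k=q's_{k-1}-a_k$, which is precisely the Galois conjugate of the greedy orbit $q^k-a_1q^{k-1}-\cdots-a_k$ of $1$, maintaining the two-sided invariant $-R<s_k\le 1$ and deriving the contradiction from the final digit $a_n\ge 1$ (vanishing of $s_n$ would force $s_{n-1}\le -1/t<-R$). What your route buys: no parity case distinction, no series summation, and an explicit accounting of which hypothesis drives which bound ($a_k\ge 0$ for the upper estimate, $a_k\le m$ for the lower, $a_n\ge 1$ for the endgame); what the paper's buys is brevity once the substitution $\beta=-1/q'$ is made. Your treatment of the comparison $\frac{\sqrt{m^2+4}-m}{2}\ge c_m$ (equality at $m=1$, numerical check at $m=2$, clearing radicals for $m\ge 3$) is equivalent to the paper's, which instead compares the dominant roots of $x^2-(m+1)x+1$ and $x^2-mx-1$.
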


\begin{proof}
The dominant root of $x^2-(m+1)x+1$ is clearly larger than that of $x^2-mx-1$
    if $m\geq 3$, and the second inequality can be explicitly checked for $m = 1, 2$.
Hence, it suffices to show the first inequality only.
Without loss of generality, we may assume that $|q'|<1$. Let
\[
1=a_1/q+a_2/q^2+\dots+a_n/q^n
\]
be the greedy expansion of 1.
Notice that $0\le a_j\le m$. Now, $q'$ satisfies the same
equation, which implies $q'<0$. Put $\beta=-1/q'$. Then, for $n$ odd
\begin{align*}
\beta^n
& =a_{n-1}\beta^{n-1}-a_{n-2}\beta^{n-2}+\dots \\
& \le m(\beta^{n-1}+\beta^{n-3}+\dots+1) \\
& = m \frac{\beta^{n+1}-1}{\beta^2-1} < \beta^n
\end{align*}
if $\beta>(\sqrt{m^2+4}+m)/2$, a contradiction.
A similar result holds when $n$ is even.
\end{proof}

\begin{remark}
It is worth noting that the above result is tight by considering the
    the root of $x^2-mx -1$, which is both a Pisot number and a simple
    Parry number.
\end{remark}

\begin{remark}
Notice, this result hold for real conjugates of non-Pisot simply Parry numbers as well.
\end{remark}

\section{Regular Pisot numbers in $(1,2)$, part 1}
\label{sec:regular 1}

Amara \cite{Amara66} gave a complete description of regular Pisot numbers, given below.

\begin{theorem}\label{Amara}
The limit points of the set of Pisot numbers in $(1,2)$ are the following:
$$\phi_1=\psi_1<\phi_2<\psi_2<\phi_3<\chi
<\psi_3<\phi_4< \cdots <\psi_r<\phi_{r+1}< \cdots <2$$
where
$$
\begin{cases}
\text{the minimal polynomial of\ } \phi_r \text{\ is\ } \Phi_r(x) := x^{r+1}-2x^r+x-1, \\
\text{the minimal polynomial of\ } \psi_r \text{\ is\ } \Psi_r(x) := x^{r+1}-x^r-\cdots-x-1, \\
\text{the minimal polynomial of\ } \chi \text{\ is\ } \X(x) := x^4-x^3-2x^2+1. \\
\end{cases}
$$
For each of these limit points, there exists an $\varepsilon$ such that all
   Pisot numbers within $\varepsilon$ of this limit point is of the form specified
   in Table~\ref{tab:regular}.

\begin{table}
{\begin{tabular}{@{}ll@{}}
Limit Point & Defining polynomials \\
\hline
$\phi_r$     & $\Phi_{A,r,n}^{\pm}(x) := \Phi_r(x) x^n\pm (x^r-x^{r-1}+1)$ \\
             & $\Phi_{B,r,n}^{\pm}(x) := \Phi_r(x) x^n\pm (x^r-x+1)$ \\
             & $\Phi_{C,r,n}^{\pm}(x) := \Phi_r(x) x^n\pm (x^r+1)(x-1)$ \\
$\psi_r$     & $\Psi_{A,r,n}^{\pm}(x) := \Psi_r(x) x^n\pm(x^{r+1}-1)$\\
             & $\Psi_{B,r,n}^{\pm}(x) := \Psi_r(x) x^n\pm(x^{r}-1)/(x-1)$ \\
$\chi$       & $\X_{A,n}^{\pm}(x) := \X(x) x^n\pm(x^3+x^2-x-1)$ \\
             & $\X_{B,n}^{\pm}(x) := \X(x) x^n\pm(x^4-x^2+1)$  \\
\end{tabular}
}
\caption{Regular Pisot numbers}
\label{tab:regular}
\end{table}
\end{theorem}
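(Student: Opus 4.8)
Since this is Amara's classification \cite{Amara66}, the plan is to reconstruct it through the function-theoretic method of Dufresnoy and Pisot. First I would set up the dictionary between Pisot numbers and rational functions: to a Pisot number $\theta\in(1,2)$ of degree $d$, with conjugates $\theta=\theta_1,\dots,\theta_d$ and minimal polynomial $P$, I attach the power-sum generating function
\[
f_\theta(z)=\sum_{i=1}^{d}\frac{1}{1-\theta_i z}=\sum_{n\ge0}\Big(\sum_{i=1}^{d}\theta_i^{\,n}\Big)z^n = d-z\,\frac{(P^\ast)'(z)}{P^\ast(z)},\qquad P^\ast(z)=z^{d}P(1/z).
\]
By Newton's identities its Taylor coefficients are integers, and since $|\theta_i|<1$ for $i\ge2$ while $\theta_1>1$ it is meromorphic in the open unit disk with a single simple pole at $z=1/\theta$, its regular part $\sum_{i\ge2}(1-\theta_iz)^{-1}$ being analytic and bounded on the closed disk; conversely, integrality together with the one-pole condition returns a Pisot number. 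The first substantive step is to recast Salem's closedness theorem in these terms: after normalization the family $\{f_\theta\}$ is uniformly bounded away from its pole, hence normal by Montel's theorem, and integrality of the coefficients rigidifies any limit so that it again corresponds to a Pisot number.

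Next I would analyse convergent sequences $\theta_n\to\theta_\infty$. Convergence of the associated functions forces the minimal polynomials into the perturbed shape
\[
P_n(x)=P_\infty(x)\,x^{k_n}\pm R(x),\qquad k_n\to\infty,
\]
with $P_\infty$ the minimal polynomial of the limit and $R$ a bounded remainder pushed into high degree. This is exactly what will produce the families $\Phi_r(x)x^n\pm(\cdots)$, $\Psi_r(x)x^n\pm(\cdots)$ and $\X(x)x^n\pm(\cdots)$ of Table~\ref{tab:regular}, with $\Phi_r,\Psi_r,\X$ the minimal polynomials of $\phi_r,\psi_r,\chi$. I would then verify by direct root-location estimates that each listed family really does consist of Pisot numbers converging to its stated limit and that the limits arrange themselves into the chain $\phi_1=\psi_1<\phi_2<\dots<2$; this part of the argument is routine.

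The hard part will be \emph{completeness}, in two directions: showing that $\{\phi_r,\psi_r,\chi\}$ exhausts the limit points of the Pisot set in $(1,2)$, and that within some $\varepsilon$-neighbourhood of each limit point \emph{every} Pisot number already has a defining polynomial from the table. Both require a careful, essentially algorithmic traversal of the compact family $\{f_\theta\}$ — the Dufresnoy--Pisot/Schur continued-fraction procedure — that simultaneously excludes any unlisted accumulation and pins down exactly which remainders $R$ preserve the Pisot property. This exhaustive neighbourhood-by-neighbourhood analysis is where essentially all of the difficulty sits; the existence and ordering of the limit points is comparatively easy once the correspondence is in place. For the complete execution I would defer to \cite{Amara66}.
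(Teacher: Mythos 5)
The paper does not actually prove this theorem: it is imported verbatim from Amara \cite{Amara66}, which is precisely where your proposal also lands, since you defer the decisive completeness/neighbourhood analysis to that same reference. Your sketch of the Dufresnoy--Pisot correspondence (power-sum generating functions, Montel normality plus integer-coefficient rigidity, and the perturbed shape $P_\infty(x)x^{k}\pm R(x)$ of the defining polynomials) accurately describes the machinery behind Amara's classification, so your treatment is essentially the same as the paper's.
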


The first few limit points are:
\begin{itemize}
\item $\phi_1 = \psi_1 \approx 1.618033989$, the root in $(1,2)$ of $x^2-x-1$
\item $\phi_2 \approx 1.754877666$, the root in $(1,2)$ of $x^3-2 x^2+x-1$
\item $\psi_2 \approx 1.839286755$, the root in $(1,2)$ of $x^3-x^2-x-1$
\item $\phi_3 \approx 1.866760399$, the root in $(1,2)$ of $x^4-2 x^3+x-1$
\item $\chi \approx 1.905166168$, the root in $(1,2)$ of $x^4-x^3-2 x^2+1$
\item $\psi_3 \approx 1.927561975$, the root in $(1,2)$ of $x^4-x^3-x^2-x-1$
\item $\phi_4 \approx 1.933184982$, the root in $(1,2)$ of $x^5-2 x^4+x-1$
\end{itemize}

Recall that $c_1 = \frac{\sqrt{5}-1}{2}$.
In this section we prove that $c_1$ is a tight lower bound for all regular
    Pisot numbers in $(1,2)$.
Assuming it is a lower bound, it is easy to see that it is tight by considering
    the polynomial $x^2-x-1$.

We see that regular Pisot numbers are roots of polynomials of the form $f(x) x^n + g(x)$.
We begin this section with an investigation of polynomials of this form.

\begin{lemma}
\label{lem:suff large}
For fixed polynomials $f$ and $g$,
let $P_n(x) = f(x) x^n + g(x)$ where $g(c_1) g(-c_1) > 0$.
There exists an $n_0$, dependent on polynomials $f$ and $g$, such that $P_n(c_1) P_n(-c_1) > 0$ for all $n \geq n_0$.
\end{lemma}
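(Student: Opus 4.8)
The plan is to reduce the statement to a single elementary limit computation, exploiting that $c_1 = \frac{\sqrt5-1}{2}$ lies strictly inside the unit interval. Since $0 < c_1 < 1$, we have both $c_1^n \to 0$ and $(-c_1)^n \to 0$ as $n \to \infty$. Writing out
\[
P_n(\pm c_1) = f(\pm c_1)\,(\pm c_1)^n + g(\pm c_1),
\]
the first summand is a fixed constant times a power of a number of modulus less than one, so it vanishes in the limit. Thus $P_n(c_1) \to g(c_1)$ and $P_n(-c_1) \to g(-c_1)$ as $n \to \infty$.

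Taking the product, I would conclude that
\[
\lim_{n\to\infty} P_n(c_1)\,P_n(-c_1) = g(c_1)\,g(-c_1),
\]
and this limit is \emph{strictly} positive precisely because of the hypothesis $g(c_1)\,g(-c_1) > 0$. A positive limit is bounded away from $0$, so by the definition of convergence there is an index $n_0$ (depending on $f$ and $g$ only through the four constants $f(\pm c_1)$ and $g(\pm c_1)$) beyond which the product stays within, say, half its limiting value, and in particular remains positive. This $n_0$ is exactly the one asserted.

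There is no genuine obstacle here: the entire argument rests on the single fact $|c_1| < 1$, which is immediate from the definition of $c_1$. The only point needing any care is that the limit be strictly positive rather than merely nonnegative, and this is guaranteed by the strict inequality in the hypothesis, which also forces $g(c_1)$ and $g(-c_1)$ to be nonzero with a common sign. If an explicit threshold were wanted in place of the soft limit argument, one could take any $n_0$ with $|f(\pm c_1)|\,c_1^{n_0} < |g(\pm c_1)|$ for both signs; then each $P_n(\pm c_1)$ shares the sign of $g(\pm c_1)$ for all $n \geq n_0$, so their product shares the sign of $g(c_1)\,g(-c_1)$ and is therefore positive. For the existence statement, however, the limit argument suffices.
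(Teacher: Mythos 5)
Your proof is correct and follows essentially the same route as the paper's: both rest on the single observation that $|c_1|<1$ forces $f(\pm c_1)(\pm c_1)^n \to 0$, so $P_n(\pm c_1)\to g(\pm c_1)$ and the product is eventually positive. Your explicit threshold $|f(\pm c_1)|\,c_1^{n_0} < |g(\pm c_1)|$ is precisely the ``explicitly calculated'' $n_0$ the paper alludes to.
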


\begin{proof}
This follows from the fact that $c_1^n f(c_1) \to 0$ and
    $(-c_1)^n f(-c_1) \to 0$ as $n \to \infty$.
This $n_0$ can be explicitly calculated.
\end{proof}

We say an integer polynomial $g$ is {\em of height} $h$ if all
    of its coefficients are bounded by $h$ in absolute value and at least
    one coefficient is $h$ in absolute value.

\begin{lemma}
\label{lem:suff large 2}
For fixed polynomial $g$ and polynomial $f$ of height bounded by $h$,
    let $P_n(x) = f(x) x^n + g(x)$ where $g(c_1) g(-c_1) > 0$.
There exists an $n_0$, dependent on polynoimal $g$ and height $h$, such that $P_n(c_1) P_n(-c_1) > 0$ for all $n \geq n_0$.
\end{lemma}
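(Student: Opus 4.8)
The plan is to mimic the proof of Lemma~\ref{lem:suff large}, but to arrange the choice of $n_0$ so that it is \emph{uniform} over every polynomial $f$ of height at most $h$. Writing $P_n(c_1) = c_1^n f(c_1) + g(c_1)$ and $P_n(-c_1) = (-c_1)^n f(-c_1) + g(-c_1)$, the idea is that the perturbation terms $c_1^n f(c_1)$ and $(-c_1)^n f(-c_1)$ become negligibly small for large $n$, so that $P_n(c_1)$ and $P_n(-c_1)$ inherit the signs of $g(c_1)$ and $g(-c_1)$ respectively; since $g(c_1) g(-c_1) > 0$ these two signs agree and the product is positive.

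The key observation, and the reason $n_0$ can be chosen independently of $f$, is that $|f(\pm c_1)|$ is bounded in terms of $h$ alone, with no reference to the degree of $f$. Indeed, if $f(x) = \sum_i a_i x^i$ with $|a_i| \le h$, then since $0 < c_1 < 1$ I would estimate
\[
|f(\pm c_1)| \le h \sum_{i \ge 0} c_1^{\,i} = \frac{h}{1 - c_1} =: B,
\]
the bound $B$ depending only on $h$. I would then set $\delta = \min(|g(c_1)|, |g(-c_1)|) > 0$, which is positive because $g(c_1) g(-c_1) > 0$ forces both factors to be non-zero, and choose $n_0$ large enough that $c_1^{n_0} B < \delta$, for instance any
\[
n_0 > \frac{\log\bigl(B/\delta\bigr)}{\log(1/c_1)}.
\]

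For $n \ge n_0$ this yields $|c_1^n f(c_1)| \le c_1^n B \le c_1^{n_0} B < \delta \le |g(c_1)|$, so $P_n(c_1)$ has the same sign as $g(c_1)$; the identical estimate $|(-c_1)^n f(-c_1)| = c_1^n |f(-c_1)| \le c_1^n B$ shows $P_n(-c_1)$ has the same sign as $g(-c_1)$. Hence $P_n(c_1) P_n(-c_1)$ has the same sign as $g(c_1) g(-c_1) > 0$, as required, and the resulting $n_0$ depends only on $g$ through $\delta$ and on $h$ through $B$.

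I do not anticipate a serious obstacle here; the only point requiring care, and the sole difference from Lemma~\ref{lem:suff large}, is precisely this uniformity in $f$. That uniformity is exactly what the convergence of the geometric series $\sum_i c_1^{\,i}$ delivers: because $|c_1| < 1$, a height bound on the coefficients controls $|f(\pm c_1)|$ regardless of $\deg f$, so a single threshold $n_0$ works simultaneously for every admissible $f$.
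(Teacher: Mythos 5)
Your proof is correct and follows essentially the same route as the paper: the paper's own (two-sentence) proof likewise observes that $|f(\pm c_1)|$ is bounded by a constant depending only on $h$ and then proceeds as in Lemma~\ref{lem:suff large}. Your geometric-series bound $h/(1-c_1)$ and the explicit threshold for $n_0$ simply make that argument fully explicit.
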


\begin{proof}
We notice that $|f(c_1)|$ and $|f(-c_1)|$ are bounded by a constant dependent only on $h$.
The rest of the proof is as before.
\end{proof}

\begin{theorem}
\label{thm:m=1}
Let $q \in (1,2)$ be a regular Pisot number, and $q'$ a conjugate of $q$.
Then $|q'| \geq c_1$.
\end{theorem}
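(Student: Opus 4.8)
The plan is to reduce the statement to a claim about real roots in the interval $(-c_1,c_1)$ and then exploit the sign Lemmas~\ref{lem:suff large} and \ref{lem:suff large 2}. First I would invoke Theorem~\ref{thm:complex}: since $c_1=\frac{\sqrt5-1}{2}<\frac23=\frac{1}{m+1/2}$ for $m=1$, every non-real conjugate already satisfies $|q'|>\frac23>c_1$, and there is at most one real conjugate with $|q'|<\frac23$. As $(-c_1,c_1)\subseteq(-\frac23,\frac23)$, it therefore suffices to prove that $q$ has no real conjugate inside the open interval $(-c_1,c_1)$; equivalently, that this single potentially small real conjugate cannot enter $(-c_1,c_1)$.

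Next I would use that a regular Pisot number $q$ has minimal polynomial dividing one of the defining polynomials $P_n(x)=f(x)x^n+g(x)$ listed in Table~\ref{tab:regular}. The remaining (non-conjugate) roots of $P_n$ are roots of cyclotomic factors, so they lie on the unit circle and in particular outside $(-c_1,c_1)$, the only real roots of unity being $\pm1\notin(-c_1,c_1)$ as $c_1<1$. Consequently every real root of $P_n$ in $(-c_1,c_1)$ is a conjugate of $q$, and by the previous paragraph there is at most one such root. The key step is then a parity observation: if $P_n(c_1)P_n(-c_1)>0$, then $P_n$ has an even number of real roots (with multiplicity) in $(-c_1,c_1)$, and being at most one this number must be $0$. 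Hence $q$ has no conjugate in $(-c_1,c_1)$, giving $|q'|\ge c_1$.

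It then remains to verify the hypothesis $g(c_1)g(-c_1)>0$ for each family and to extract the sign $P_n(c_1)P_n(-c_1)>0$. Using the identity $c_1^2=1-c_1$, I would check the seven shapes of $g$ in turn: for instance $g=\pm(x^{r+1}-1)$ is negative at both $\pm c_1$ since $|c_1|<1$; $g=\pm(x^r+1)(x-1)$ and $g=\pm(x^3+x^2-x-1)=\pm(x+1)^2(x-1)$ are negative at both endpoints; and the even polynomial $g=\pm(x^4-x^2+1)$ is positive at both. The only genuinely degenerate case is $\Phi^{\pm}_{A,r,n}$ with $r=1$, where $g=\pm x$ and the sign hypothesis fails; this is precisely the sequence accumulating at the golden ratio, the extremal case realising $c_1$, and must be handled separately by hand. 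With the sign of $g$ settled, Lemma~\ref{lem:suff large} gives $P_n(c_1)P_n(-c_1)>0$ for all $n\ge n_0$. Since both $f$ and $g$ vary with $r$ in the $\phi_r$ and $\psi_r$ families, I would instead invoke the uniform version Lemma~\ref{lem:suff large 2}, using that all the relevant $f$ have coefficients bounded (in $\{-2,-1,0,1\}$), to obtain a single threshold $n_0$ valid for all $r$.

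I expect the main obstacle to be organising this doubly-infinite case analysis so that only finitely many pairs $(r,n)$ survive the uniform threshold, together with justifying that the non-conjugate roots of each $P_n$ genuinely lie on the unit circle, so that ``at most one root of $P_n$ in $(-c_1,c_1)$'' is legitimate. The finitely many remaining small-$n$ (and small-$r$) exceptions, including the $r=1$ degeneracy, would then be dispatched by direct computation, as anticipated by the remark that $n_0$ can be made explicit.
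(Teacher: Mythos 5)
Your framework is the same as the paper's: Theorem~\ref{thm:complex} to reduce to a single potential real conjugate, a parity (sign-change) argument at $\pm c_1$, Lemmas~\ref{lem:suff large} and~\ref{lem:suff large 2} to supply the sign condition for large $n$, explicit sign checks on the various $g$'s, and a finite residue of cases done by direct computation. In fact you make explicit two points the paper leaves implicit (that the non-conjugate roots of the defining polynomials are cyclotomic, hence avoid $(-c_1,c_1)$, and the even/odd root-counting step), and you correctly isolate the one degenerate family $\Phi_{A,1,n}^{\pm}$, where $g=\pm x$ kills the sign hypothesis.

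The genuine gap is in your reduction to finitely many exceptional cases. A threshold $n\geq n_0$ uniform in $r$ (which, note, is not literally an application of Lemma~\ref{lem:suff large 2}: in the families $\Phi_{A,r,n}^{\pm},\Phi_{B,r,n}^{\pm},\Phi_{C,r,n}^{\pm},\Psi_{A,r,n}^{\pm},\Psi_{B,r,n}^{\pm}$ the polynomial $g$ varies with $r$ too, so you need $g(c_1)g(-c_1)$ uniformly bounded away from $0$, true here but requiring proof) still leaves the set $\{(r,n): n<n_0,\ r\ \text{arbitrary}\}$, which is infinite, not finite. The paper dispatches exactly these cases with two devices absent from your proposal: (i) index symmetries such as $\Phi_{A,r,n}^{+}=\Phi_{A,n+1,r-1}^{+}$, $\Phi_{C,r,n}^{+}=\Phi_{C,n,r}^{+}$, $\Psi_{B,r,n}^{+}=\Psi_{B,n,r}^{+}$, which convert small-$n$/large-$r$ plus-sign cases into already-covered ones (equivalently, one can regroup $f(x)x^n\pm g(x)$ as $\tilde f(x)x^{r}+\tilde g(x)$ and get a threshold on $r$ instead of $n$); and (ii) an intermediate-value argument showing the minus-sign polynomials with $n<r$ (or $r\leq n$, as appropriate) have no Pisot root in $(1,2)$ at all --- e.g.\ $\Phi_{A,r,n}^{-}(1)=-2$ and $\Phi_{A,r,n}^{-}(2)=2^{n}-2^{r-1}-1<0$ for $n<r$, so such a polynomial has an even number of roots in $(1,2)$, incompatible with having a Pisot root there. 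Only after (i) and (ii) does the residue become genuinely finite ($r,n\leq 6$ in the paper) and checkable by hand. As written, your claim that ``only finitely many small-$n$ (and small-$r$) exceptions'' remain is false, and this is precisely the obstacle you flagged but did not resolve.
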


\begin{proof}
Many of the regular Pisot numbers $q \in (1,2)$ can be proven to satisfy Conjecture~\ref{conj:exact} by use of Theorem~\ref{thm:complex}, Lemma~\ref{lem:suff large} and Lemma~\ref{lem:suff large 2}.
For example, consider $\Phi_r(x) = x^r(x-1)+(x-1)$.
Letting $f(x) = x-1$ and $g(x) = x-1$ we see that $\Phi_r(x) = f(x) x^r + g(x)$ meets the conditions of Lemma \ref{lem:suff large}.
In this case we can take $n_0 = 3$ and hence for all $r \geq n_0 = 3$ we have that all conjugates of $\Phi_r(x)$ satisfy $|q'| \geq c_1$.
This is summarized in the first line of Table~\ref{tab:regular proof}.
Similar informaiton is provided for other regular Pisot numbers in Table~\ref{tab:regular proof}.
\begin{table}
\begin{tabular}{lll}
Defining polynomials  & Restrictions & Note\\
\hline
$\Phi_r$ &  $r \geq 3$
    & Lemma~\ref{lem:suff large}\\
$\Psi_r$ &  $r \geq 1$
    & Multiple by $(x-1)$, Lemma~\ref{lem:suff large}\\
$\Phi_{A,r,n}^{\pm}$   & $n \geq r+1, r \geq 4$
    & Lemma~\ref{lem:suff large 2} with $h = 2$ \\
$\Phi_{B,r,n}^{\pm}$   & $n \geq r+1, r\geq 4$
    & Lemma~\ref{lem:suff large 2} with $h = 2$ \\
$\Phi_{B,r,n}^{+}$     & $n \leq r, n\geq 4$
    & Lemma~\ref{lem:suff large 2} with $h = 2$ \\
$\Phi_{C,r,n}^{\pm}$   & $n \geq r+1, r\geq 6$
    & Lemma~\ref{lem:suff large 2} with $h = 2$ \\
$\Psi_{A,r,n}^{\pm}$   & $ n \geq r+2, r \geq 2$
    & Lemma~\ref{lem:suff large 2} with $h = 1$ \\
$\Psi_{A,r,n}^{+} $    & $ n \leq r+1, n \geq 2$
    & Lemma~\ref{lem:suff large 2} with $h = 1$ \\
$\Psi_{B,r,n}^{\pm}$   & $ n \geq r, r \geq 4$
    & Multiply by $(x-1)$, Lemma~\ref{lem:suff large} \\
$\X_{A,n}^{\pm}$       & $n \geq 2$
    & Lemma~\ref{lem:suff large} \\
$\X_{B,n}^{\pm}$       & $n \geq 1$
    & Lemma~\ref{lem:suff large}
\end{tabular}
\caption{Proof for some regular Pisot}
\label{tab:regular proof}
\end{table}

There are a number of special cases that we still need to look at.
In particular, if $n < r$ or $r$ being reasonably small.

We notice that
$\Phi_{A,r,n}^{+} = \Phi_{A,n+1,r-1}^{+}$,
$\Phi_{C,r,n}^{+} = \Phi_{C,n,r}^{+}$, and
$\Psi_{B,r,n}^{+} = \Psi_{B,n,r}^{+}$,

We further notice that
$\Phi_{A,r,n}^{-}$ does not have a Pisot root in $(1,2)$ for $n < r$.
To see this, notice that $\Phi_{A,r,n}^{-}(1) = -2$ and
                          $\Phi_{A,r,n}^{-}(2) = 2^n - 2^{r-1}-1$.
Hence by the intermediate value theorem, $\Phi_{A,r,n}^{-1}$
    cannot have a single root between $1$ and $2$ when $n < r$,
    and hence cannot have a Pisot root in $(1,2)$.
This can similarly be said for
$\Phi_{B,r,n}^{-}$ and $\Psi_{B,r,n}^{-}$ for $r < n$ and for
$\Phi_{C,r,n}^{-}$ and $\Psi_{A,r,n}^{-}$ for $r \leq n$.

All remaining cases have the property that $r, n \leq 6$.
We verify that either the polynomial does not have a Pisot root in $(1,2)$,
    or that the conjecture holds for these cases.
\end{proof}

\section{Regular Pisot numbers in $(1,2)$, part 2}
\label{sec:regular 2}

As before, we note that regular Pisot numbers are roots of polynomials of the form $f(x) x^n + g(x)$.
The limit points of roots of such polynomials relate to the roots of $f$ and $g$.

\begin{theorem}
\label{thm:rouche}
\begin{enumerate}
\item Let $f(q) = 0$ for $|q| > 1$.  Then there exists a root $q_n$ of
   $f(x) x^n + g(x)$ such that $q_n \to q$ as $n \to \infty$.
\label{limits:1}
\item Let $q_n$ be roots of $f(x) x^n + g(x)$ such that $q_n \to q$
    with $|q| > 1$. Then $f(q) = 0$.
\label{limits:2}
\item Let $g(q) = 0$ for $|q| < 1$.  Then there exists a root $q_n$ of
   $f(x) x^n + g(x)$ such that $q_n \to q$ as $n \to \infty$.
\label{limits:3}
\item Let $q_n$ be roots of $f(x) x^n + g(x)$ such that $q_n \to q$
    with $|q| < 1$. Then $g(q) = 0$.
\label{limits:4}
\end{enumerate}
\end{theorem}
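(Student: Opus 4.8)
The plan is to treat the four assertions as two pairs: the ``existence'' statements \eqref{limits:1} and \eqref{limits:3}, which I would establish with Rouch\'e's theorem, and the ``necessity'' statements \eqref{limits:2} and \eqref{limits:4}, which follow from a direct limiting estimate. Throughout I write $P_n(x)=f(x)x^n+g(x)$ and use that $f$ and $g$ are polynomials, hence entire, so the analytic machinery applies on any disk.

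For \eqref{limits:2}, suppose $P_n(q_n)=0$ with $q_n\to q$ and $|q|>1$. Then $f(q_n)=-g(q_n)/q_n^{\,n}$. Since $q_n\to q$, the value $g(q_n)$ stays bounded, while $|q_n|^{\,n}\to\infty$ (because eventually $|q_n|\ge 1+\delta$ for some $\delta>0$), so $f(q_n)\to 0$; continuity of $f$ then gives $f(q)=0$. Statement \eqref{limits:4} is the mirror image: from $g(q_n)=-f(q_n)q_n^{\,n}$ with $|q_n|^{\,n}\to 0$ when $|q|<1$, and $f(q_n)$ bounded, we get $g(q_n)\to 0$ and hence $g(q)=0$.

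For the existence statement \eqref{limits:1}, fix a root $q$ of $f$ with $|q|>1$. For small $\varepsilon>0$ take the circle $\gamma=\{\,|z-q|=\varepsilon\,\}$ with $\varepsilon<|q|-1$ and small enough that $q$ is the only zero of $f$ in the closed disk. On $\gamma$ we have $|f(z)|\ge\delta>0$ and $|g(z)|\le C$, while $|z^n|\ge(|q|-\varepsilon)^n\to\infty$, so for $n$ large $|f(z)z^n|>|g(z)|$ on $\gamma$. Since $0$ lies outside $\gamma$, the function $f(z)z^n$ has exactly $\mathrm{ord}_q(f)\ge 1$ zeros inside $\gamma$, so by Rouch\'e's theorem $P_n(z)=f(z)z^n+g(z)$ has a zero inside $\gamma$ as well. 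Letting $\varepsilon\to 0$ along a sequence and, for each $\varepsilon$, taking all sufficiently large $n$, a diagonal argument produces roots $q_n$ of $P_n$ with $q_n\to q$. Statement \eqref{limits:3} is proved identically with the two terms exchanged: around a root $q$ of $g$ with $|q|<1$ take a circle inside the unit disk on which $|g(z)|\ge\delta$; there $|f(z)z^n|\le C(|q|+\varepsilon)^n\to 0$, so $|g(z)|>|f(z)z^n|$ for large $n$, and Rouch\'e again matches the zero counts of $g$ and $P_n$.

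The only genuine care needed is in the Rouch\'e step for \eqref{limits:1} and \eqref{limits:3}: one must choose the radius $\varepsilon$ so that the contour lies strictly in the relevant region ($|z|>1$ for \eqref{limits:1}, $|z|<1$ for \eqref{limits:3}), encloses no zero of the dominant factor other than $q$, and, in \eqref{limits:1}, excludes the zero of $z^n$ at the origin. Once the contour is fixed, the dominance inequality on $\gamma$ is automatic for large $n$, and the passage from ``a root in each shrinking disk'' to a convergent sequence $q_n\to q$ is the standard diagonal extraction. I expect this bookkeeping, rather than any analytic difficulty, to be the main thing to get right.
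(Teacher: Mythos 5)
Your proof is correct, and it takes a partly different route from the paper's. The paper proves parts \eqref{limits:1} and \eqref{limits:2} only, both via Rouch\'e's theorem (the converse \eqref{limits:2} by a Rouch\'e-plus-contradiction argument: if $f(q)\neq 0$, a circle around $q$ on which $|f(z)z^n|>|g(z)|$ shows $P_n$ has no roots near $q$ for large $n$), and then disposes of \eqref{limits:3} and \eqref{limits:4} in one line by passing to the reciprocal polynomial, which swaps the roles of $f$ and $g$ and inverts the roots. You instead prove the two converses \eqref{limits:2} and \eqref{limits:4} by the elementary estimate $f(q_n)=-g(q_n)/q_n^{\,n}\to 0$ (resp.\ $g(q_n)=-f(q_n)q_n^{\,n}\to 0$), which needs no contour at all, and you run the Rouch\'e argument for \eqref{limits:3} directly inside the unit disk rather than reducing to \eqref{limits:1}. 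The paper's reduction buys brevity---one Rouch\'e setup covers everything---but your symmetric treatment avoids the degree and leading-coefficient bookkeeping of the reciprocal transformation and handles the edge case $q=0$ in parts \eqref{limits:3}--\eqref{limits:4}, where the map $z\mapsto 1/z$ is unavailable. Two further points in your favour: you correctly count $\mathrm{ord}_q(f)\ge 1$ zeros inside the contour (the paper says ``exactly one root,'' which is imprecise when $q$ is a multiple root of $f$), and you make explicit the diagonal extraction that turns ``a root in each shrinking disk for $n$ large'' into a single sequence $q_n\to q$, which the paper compresses into ``as $\varepsilon$ is arbitrary.''
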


\begin{proof}
We prove Parts \eqref{limits:1} and \eqref{limits:2} only.  The proof of the other two
    follows by considering the reciprocal polynomial.

The first follows from an application of Rouch\'e's Theorem.
Let $C := \{z : |z-q| = \varepsilon\}$.
Choose $\varepsilon < |q|-1$ such that the only root of $f$ inside of $C$ is at $q$.
Then there exists an $N$ such that for all $n \geq N$ we have that
    $|f(z) z^n| > |g(z)|$ on $C$.
Hence $f(z) z^n + g(z)$ will have exactly one root inside of $C$ by
    Rouch\'e's Theorem.
As $\varepsilon$ is arbitrary, this proves the result.

For the converse, assume that $q_n \to q$ and $f(q) \neq 0$.
Let $C := \{z : |z-q| = \varepsilon\}$.
Choose $\varepsilon$ such that $f$ has no roots inside
    of $C$, and further that $\varepsilon < |q|-1$.
Then again, there exists an $N$ such that for all $n \geq N$ we have that
    $|f(z) z^n| > |g(z)|$ on $C$.
Hence $f(z) z^n + g(z)$ will have no roots inside of $C$ by
    Rouch\'e's Theorem for all $n \geq N$.
Hence $q_n \not\to q$, a contradiction.
\end{proof}

\begin{theorem}
\label{thm:4.2}
Let $\M(P)$ be the minimal absolute value of the roots of $P$.
\begin{enumerate}
\item
\label{part:1}
We have that
\begin{align*}
1 & = \lim_{r\to\infty} \M(\Phi_r)  \\
  & = \lim_{r\to\infty} \M(\Psi_r)   \\
  & = \lim_{n\to\infty} \M(\X_{A,n}^{\pm} )  \\
  & = \lim_{n\to\infty} \M(\X_{B,n}^{\pm} )
\end{align*}
\item
\label{part:2}
For fixed $r$ we have
\begin{align*}
1 & = \lim_{n\to\infty} \M(\Phi_{C,r,n}^{\pm}) \\
  & = \lim_{n\to\infty} \M(\Psi_{A,r,n}^{\pm}) \\
  & = \lim_{n\to\infty} \M(\Psi_{B,r,n}^{\pm}) \\
  & = \lim_{n\to\infty} \M(\Psi_{C,r,n}^{\pm})
\end{align*}
\item
\label{part:3}
For fixed $r$ we have
\begin{align*}
\lim_{n\to\infty} \M(\Phi_{A,r,n}^{\pm})  & = \kappa_{A,r}  < 1 \\
\lim_{n\to\infty} \M(\Phi_{B,r,n}^{\pm})  & = \kappa_{B,r}  < 1
\end{align*}
\end{enumerate}
\end{theorem}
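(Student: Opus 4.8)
The plan is to treat all three parts uniformly. Each defining polynomial can be written as $P_N(x) = f(x)x^N + g(x)$, where $N$ denotes whichever of $r$ or $n$ tends to infinity, and I would read off the behaviour of the small roots directly from Theorem~\ref{thm:rouche}. Parts \eqref{limits:3} and \eqref{limits:4} of that theorem say that the roots of $P_N$ lying strictly inside the unit disk converge, as $N\to\infty$, to the roots of $g$ lying strictly inside the unit disk, and to nothing else. Combining this with a compactness argument — any subsequence of minimal-modulus roots has a convergent sub-subsequence whose limit $q$ must satisfy $g(q)=0$ whenever $|q|<1$ — I would conclude
\[
\lim_{N\to\infty}\M(P_N) \;=\; \min\Bigl(1,\ \min\{|z| : g(z)=0,\ |z|<1\}\Bigr),
\]
with the convention that the inner minimum equals $1$ when $g$ has no root strictly inside the unit disk. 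Everything then reduces to locating the roots of each correction term $g$ relative to the unit circle.

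For Parts \eqref{part:1} and \eqref{part:2} I would exhibit, family by family, a factorization showing that $g$ has no root strictly inside the circle, so that the displayed limit is $1$. Writing $\Phi_r(x)=x^r(x-2)+(x-1)$ gives $g=x-1$; for $\Psi_r$ I would first multiply by $(x-1)$ to obtain $(x-1)\Psi_r(x)=x^{r+1}(x-2)+1$, so that $g=1$ has no roots at all; for $\X_{A,n}^{\pm}$ the correction term factors as $\pm(x+1)^2(x-1)$, for $\X_{B,n}^{\pm}$ it is $\pm(x^4-x^2+1)$, for $\Phi_{C,r,n}^{\pm}$ it is $\pm(x^r+1)(x-1)$, and the $\Psi_{A,r,n}^{\pm}$ and $\Psi_{B,r,n}^{\pm}$ correction terms are $\pm(x^{r+1}-1)$ and $\pm(x^r-1)/(x-1)$ respectively (and $\Psi_{C,r,n}^{\pm}$ is handled the same way once its correction term is identified). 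In every case the roots of $g$ are roots of unity or otherwise lie on $|z|=1$, so the inner minimum is $1$; the auxiliary factor $x-1$ introduced for the $\Psi$ families contributes only a root of modulus $1$ and so does not disturb the limiting minimum.

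For Part \eqref{part:3} the correction terms are $g(x)=x^r-x^{r-1}+1$ (for $\Phi_{A,r,n}^{\pm}$) and $g(x)=x^r-x+1$ (for $\Phi_{B,r,n}^{\pm}$), and here the real content is that $g$ genuinely has a root strictly inside the unit disk, giving $\kappa_{A,r},\kappa_{B,r}<1$; this is the step I expect to be the main obstacle. I would handle it with a counting argument. Since $|g(0)|=1$ equals the leading coefficient, the product of the moduli of the $r$ roots of $g$ equals $1$. Any root on the unit circle must satisfy $|x-1|=1$: taking moduli in $x^{r-1}(x-1)=-1$ (respectively $x^r=x-1$) forces $|x-1|=1$, which together with $|x|=1$ pins $x$ to $e^{\pm i\pi/3}$, so at most two roots lie on the circle. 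Hence for $r\ge 3$ at least one root lies off the circle; and a single off-circle root is impossible because the product of all moduli is $1$, so there are at least two off-circle roots, forcing at least one strictly inside (and one strictly outside) the disk. Thus $\{|z|:g(z)=0,\ |z|<1\}$ is non-empty with minimum $\kappa_{A,r}$ (respectively $\kappa_{B,r}$) strictly less than $1$, and the reduction above yields the stated limit.

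The only additional care needed throughout is to confirm that it is the minimal-modulus \emph{interior} root of $g$ that governs the limit, rather than roots drifting toward the circle — precisely what the outer $\min(1,\,\cdot\,)$ in the reduction records, and what the compactness step rules out in the opposite direction.
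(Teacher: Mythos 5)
Your proposal is correct, and its skeleton is the same as the paper's: both reduce the theorem, via Theorem~\ref{thm:rouche}, to locating the roots of each correction term $g$ relative to the unit circle, and then check the families one by one. Where you genuinely go beyond the paper is Part~\eqref{part:3}: the paper merely \emph{asserts} that $x^r-x^{r-1}+1$ and $x^r-x+1$ have a root strictly inside the unit disk (defining $\kappa_{A,r},\kappa_{B,r}$ and quoting numerical minima at $r=3$ and $r=5$), whereas your counting argument proves it --- the product of the root moduli equals $|g(0)|=1$, any unimodular root is pinned to $e^{\pm i\pi/3}$, hence for $r\ge3$ some roots lie off the circle, and the product condition then forces at least one strictly inside and one strictly outside. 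This also surfaces the restriction $r\ge3$ that the paper leaves implicit: for $r=2$ both correction terms equal the cyclotomic polynomial $x^2-x+1$, so the limit there is $1$ and Part~\eqref{part:3} as literally stated fails. Two details you should patch, both routine: (i) ``at most two roots lie on the circle'' counts \emph{distinct} roots, so to conclude that fewer than $r$ roots (with multiplicity) are unimodular you should note that $e^{\pm i\pi/3}$ are necessarily simple roots, since the derivatives $x^{r-2}\bigl(rx-(r-1)\bigr)$ and $rx^{r-1}-1$ have all their zeros of modulus less than $1$; (ii) when $g$ has no interior roots, the bound $\limsup_{N}\M(P_N)\le 1$ is not a formal consequence of Parts~\eqref{limits:3}--\eqref{limits:4} of Theorem~\ref{thm:rouche} alone; it needs either Part~\eqref{limits:2} together with a compactness/counting argument showing that all but $\deg f$ of the roots of $P_N$ eventually lie in $|z|<1+\varepsilon$, or the standing fact that these defining polynomials are Pisot polynomials times cyclotomic factors, so that all non-dominant roots already lie in the closed unit disk. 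Both gaps are minor, and the paper itself elides them; likewise, your inability to say anything concrete about $\Psi_{C,r,n}^{\pm}$ is shared by the paper's own proof, since no such family is ever defined --- that is a typo in the statement.
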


\begin{proof}
To see Part \eqref{part:1}, we notice that $\Phi_n$, $\Psi_n$ (after multiplying by $(x-1)$)
    $\X_{A, n}^{\pm}$ and $\X_{B, n}^{\pm}$  are
    of the form $f(x) x^n + g(x)$ where $g$ has no roots inside the
    unit circle.
Hence we have that the non-Pisot conjugates of $\Phi_n$ and $\Psi_n$
    tend to 1 in absolute value as $n$ tends to infinity.

To see Part \eqref{part:2}
    we further notice for fixed $r$ that $\Phi_{C,r,n}^{\pm}, \Psi_{A,r,n}^{\pm},
        \Psi_{B,r,n}^{\pm}, \X_{A,n}^{\pm}$ and
        $\X_{B,n}^{\pm}$ all have the
    property that as $n \to \infty$ that all of the non-Pisot conjugates
    tend to $1$ in absolute value.

Lastly, to prove Part \eqref{part:3}, fix $r$.
We notice that $\Phi_{A,r,n}^{\pm}$ will have conjugates
    $q'_n \to q'$ where $q'$ is a root of $x^r - x^{r-1} +1$, with $|q'| < 1$.
We define $\kappa_{A, r}$ to be the minimal absolute value of the roots of $x^r-x^{r-1}+1$.
That is, $\Phi_{A,r,n}^{\pm}$ will have a root tending to $\kappa_{A,r}$ is absolute value as $n \to \infty$.
We have that $\kappa_{A,r}$ is minimized at $r = 3$ with $\kappa_{A,3} \approx 0.754877$.
As similar result holds for $\Phi_{B,r,n}^\pm$, defining $\kappa_{B,r}$ similary and
    with $\kappa_{B,r}$ minimized at $r = 5$.
Here $\kappa_{B,5} \approx 0.84219023$.
\end{proof}

We observe from the above proof that
   for any fixed $r$ that the roots of
   $\Psi_{A,r,n}^{\pm}$ approach the $r+1$-th roots of unity as
   $n \to \infty$.
Moreover, as all of the roots of $x^{r+1}-1$ are of absolute value $1$ we
    see that as $n \to \infty$ we have that $\min |q'| \to 1$ where the
    minimum is taken over the conjugates of $\Psi_{A,r,n}^{\pm}$.
A similar thing can be said for $\Phi_{C,r,n}^{\pm}$ and
    $\Psi_{B, r, n}^{\pm}$.

This doesn't tell the whole tale though.
For example, consider $\Psi_{B, 1, n}^{+} (x) = (x^2-x-1) x^n + 1$.
This clearly has $n+1$ roots inside the unit circle.
Computationally it appears that the roots of this polynomial approach the
    roots of $x^{n+1}-1$.
We will show a slightly weaker result.

\begin{theorem}
Let $C_n = \mathrm{hull} \{q' : |q'| < 1, q' \text{ a root of }
       \Psi_{B, 1, n}^{+} \}$.
For all $\varepsilon$ there exists an $N$ such that for all $n > N$ we have
\[
\{z : |z| \leq 1-\varepsilon \}
  \subset C_n
  \subset \{z : |z| \leq 1 \} \]
\end{theorem}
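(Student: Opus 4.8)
The plan is to prove the two inclusions separately, the right one being essentially free and the left one carrying all the content. For the right inclusion, every root $q'$ used in forming $C_n$ satisfies $|q'|<1$ and the closed unit disk is convex, so $C_n\subset\{|z|\le 1\}$ at once. For the left inclusion I would use the support-function description of containment: a convex set $K$ contains $\{|z|\le 1-\varepsilon\}$ if and only if $\max_{z\in K}\operatorname{Re}(e^{-i\psi}z)\ge 1-\varepsilon$ for every $\psi$. Since a linear functional is maximized over $C_n$ at one of the roots, it suffices to show that for every direction $\psi$ there is an inside-root $q'$ whose argument is within $\delta$ of $\psi$ and whose modulus exceeds $1-\eta$; then $\operatorname{Re}(e^{-i\psi}q')\ge(1-\eta)\cos\delta$, and choosing $\eta=\varepsilon/2$ and $\delta$ so small that $(1-\eta)\cos\delta\ge 1-\varepsilon$ finishes the reduction. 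Thus the theorem reduces to two statements about the inside-roots of $P_n(z)=(z^2-z-1)z^n+1$: that they all lie in a thin annulus just inside the circle, and that their arguments become $\delta$-dense.

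The modulus statement is elementary. Writing the root equation as $(z^2-z-1)z^n=-1$ and taking absolute values gives $|z^2-z-1|\,|z|^n=1$. Since $|z^2-z-1|\le 3$ on $\{|z|\le 1\}$, a root with $|z|\le 1-\eta$ would force $1\le 3(1-\eta)^n$, which fails once $n$ is large; hence for all large $n$ every root with $|z|<1$ satisfies $1-\eta<|z|<1$. It is also worth recording which roots sit on or outside the circle: a short computation gives $|z^2-z-1|^2=3-2\cos 2\theta$ on $|z|=1$, so the only candidates on the circle are $z=1$ (always a root, since $P_n(1)=0$) and $z=-1$ (a root precisely when $n$ is odd), while the single root outside the circle is the Pisot number near the golden ratio. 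Apart from these at most three exceptional roots, at arguments $0$ and $\pi$, every root is an inside-root lying in the thin annulus.

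The heart of the proof is the angular density of the inside-roots, and here I would argue by counting. The clean route is the Erd\H{o}s--Tur\'an inequality for the distribution of arguments of zeros of a polynomial: for $P_n(z)=z^{n+2}-z^{n+1}-z^n+1$ one has $\sum|a_k|=4$ and $|a_0a_{n+2}|=1$, so the number of zeros with argument in an arc of length $\ell$ differs from $\frac{\ell}{2\pi}(n+2)$ by at most $C\sqrt{(n+2)\log 4}$. For a fixed arc length $\ell=2\delta$ the main term grows like $n$ while the error grows only like $\sqrt{n}$, so each such arc eventually contains arbitrarily many zeros; discarding the at most three exceptional roots still leaves an inside-root in every arc of width $2\delta$, which is exactly the $\delta$-density required. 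Alternatively, and more in the spirit of the Rouch\'e arguments already used, one can locate the roots directly: near the circle a root $z=e^{i\theta}e^{-s/n}$ must satisfy $n\theta+\arg w(\theta)\equiv \pi \pmod{2\pi}$ with $w(\theta)=e^{2i\theta}-e^{i\theta}-1$, and since $w$ has winding number $1$ about the origin the increasing map $\theta\mapsto n\theta+\arg w(\theta)$ increases by $2\pi(n+1)$ over a full turn, producing $n+1$ solutions spaced by approximately $2\pi/n$.

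I expect the main obstacle to be making this last, self-contained count fully rigorous: upgrading the approximate predicted angles $\theta_k$ to genuine roots requires a Rouch\'e argument on small disks about each predicted location, and the bookkeeping is delicate precisely because the roots sit in a band of width $O(1/n)$, so the competing terms $(z^2-z-1)z^n$ and $1$ are of comparable size exactly where the roots live, and the control of $\arg P_n$ along the radial edges of a sector needs care. Citing Erd\H{o}s--Tur\'an sidesteps this entirely, at the cost of invoking an external equidistribution theorem; given the two properties above, the support-function reduction then yields $\{z:|z|\le 1-\varepsilon\}\subset C_n\subset\{z:|z|\le 1\}$ for all $n$ beyond a threshold depending only on $\varepsilon$.
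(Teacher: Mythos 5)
Your proposal is correct, but it takes a genuinely different route from the paper. The paper's proof passes to the reciprocal polynomial $P(x)=\Psi_{B,1,n}^{+}(x)^{*}=x^{n+2}-x^2-x+1$, computes $P''(x)=(n+2)(n+1)x^n-2$ explicitly (its roots form a regular $n$-gon of radius $\left(\tfrac{2}{(n+1)(n+2)}\right)^{1/n}\to 1$), and applies Gauss--Lucas twice: the roots of $P''$ lie in the convex hull of the roots of $P$, so that hull contains an $n$-gon approaching the unit circle. That argument is short and self-contained, needing no equidistribution input; its cost is that, as written, it controls the hull of \emph{all} roots of the reciprocal polynomial rather than the hull $C_n$ of the strictly-inside roots of $\Psi_{B,1,n}^{+}$ itself, and the transfer under $z\mapsto 1/z$ (which reflects arguments and perturbs moduli) together with the removal of the non-inside roots is left implicit --- filling that in requires essentially the two observations you make explicit. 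Your route works directly with $C_n$: the support-function reduction is exactly the right way to pass from ``a nearly-full-modulus root in every direction'' to the disk containment, your modulus bound ($1=|z^2-z-1|\,|z|^n\le 3|z|^n$, so $|z|\ge 3^{-1/n}$ for inside roots) is correct and elementary, and Erd\H{o}s--Tur\'an applied to $z^{n+2}-z^{n+1}-z^n+1$ (coefficient sum $4$, $|a_0a_{n+2}|=1$, so error $O(\sqrt{n})$ against a main term $\delta(n+2)/\pi$) gives the required $\delta$-density of arguments uniformly in the direction; amusingly, the paper itself invokes Erd\H{o}s--Tur\'an in just this way, but only for the corollary immediately following this theorem. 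The one assertion you take on faith is that $\Psi_{B,1,n}^{+}$ has exactly one root outside the unit circle, so that at most three roots are exceptional: your computation $|z^2-z-1|^2=3-2\cos 2\theta$ on $|z|=1$ rules out circle roots other than $\pm1$, but not a cluster of roots of modulus slightly larger than $1$. This fact is true and standard in the paper's framework --- the $\Psi_{B,1,n}^{+}$ are defining polynomials of regular Pisot numbers (Amara), equivalently one can argue via the Salem-type construction since $|g(z)|=1\le|z^2-z-1|$ on $|z|=1$ --- but your writeup should cite or prove it, since without it the count of exceptional roots per arc is not bounded.
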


\begin{remark}
This implies that, given any region
    $\{z: 1-\varepsilon < |z| < 1 \text{ and } \arg(z)
    \in [\theta_1,\theta_2]\}$
    that there will exists some $N$ such that for all $n \geq N$,
    we have $\Psi_{B, q, n}^{+}$ will have a root in
    this region.
\end{remark}

\begin{proof}
The second inclusion is obvious, hence we need only prove the first.

Consider $P(x) := \Psi_{B, 1, n}^{+}(x)^* = x^{n+2} -x^2-x+1$.
Consider the second derivative of $P$, which is
    $P''(x) = (n+2)(n+1) x^n - 2$.
This has roots at $\left(\frac{2}{(n+1)(n+2)}\right)^{1/n} \zeta_n$ where
    $\zeta_n$ is an $n$th root of unity.
We see that $\left(\frac{2}{(n+1)(n+2)}\right)^{1/n} \to 1$ as $n\to \infty$.
Hence as $n$ tends to infinity we have that the roots of $P''$ tend
    uniformly to the unit circle.
Moreover, the convex hull of the roots of $P''$ forms a regular $n$-gon
    with vertices approaching the unit circle.
We recall from the Gauss-Lucas Theorem
    that the complex roots of $P'$ lie within the convex hull of the
    complex roots of $P$.
Further for any $\varepsilon > 0$ we have that for $n$ sufficiently large that
    the roots of $P$ are bounded above in absolute value by $1+\varepsilon$.
As the convex hull of the roots of $P$ contains the roots of $P''$
    which is a regular $n$-gon with vertices arbitrarily close to the unit
    circle, we have our result.
\end{proof}

\begin{cor}
The set of limit points for the conjugates of Pisot numbers contains
    the unit circle.
\end{cor}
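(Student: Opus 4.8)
The plan is to exhibit, for every point $w$ on the unit circle, a sequence of Pisot numbers having a Galois conjugate converging to $w$. The natural candidates are the regular Pisot numbers defined by $\Psi_{B,1,n}^{+}(x)=(x^2-x-1)x^n+1$, whose dominant roots $q_n$ tend to $\phi_1$: by the Remark following the previous theorem, the roots of $\Psi_{B,1,n}^{+}$ eventually occupy every annular sector $\{z:1-\varepsilon<|z|<1,\ \arg z\in[\theta_1,\theta_2]\}$. Thus it suffices to show that these near-circle roots are genuine conjugates of the Pisot numbers $q_n$, and then to let the sector and the annulus shrink to $w$.

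The crux is therefore to identify which roots of $\Psi_{B,1,n}^{+}$ are conjugates of $q_n$. First I would use Theorem~\ref{thm:rouche}\eqref{limits:1}--\eqref{limits:2} to argue that, since $x^2-x-1$ has the single root $\phi_1$ of modulus $>1$, for all sufficiently large $n$ the polynomial $\Psi_{B,1,n}^{+}$ has exactly one root outside the closed unit disk, namely $q_n\to\phi_1$. Writing $\Psi_{B,1,n}^{+}=P_n\,R_n$ with $P_n$ the minimal polynomial of $q_n$, the cofactor $R_n$ is a monic integer polynomial all of whose roots lie in the closed unit disk. Since the constant term of $\Psi_{B,1,n}^{+}$ equals $1$, zero is not a root, so by Kronecker's theorem every irreducible factor of $R_n$ is cyclotomic; in particular all roots of $R_n$ lie exactly on the unit circle (for instance $x=1$, which divides $\Psi_{B,1,n}^{+}$, is absorbed here). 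Consequently every root of $\Psi_{B,1,n}^{+}$ lying strictly inside the open unit disk is a root of $P_n$, i.e.\ a Galois conjugate of the Pisot number $q_n$.

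With this in hand, fix $w=e^{i\theta}$ and $\varepsilon>0$, and apply the Remark to the sector around $w$ determined by a small arc $[\theta-\delta,\theta+\delta]$ together with the annulus $1-\varepsilon<|z|<1$: for all large $n$ there is a root $z_n$ of $\Psi_{B,1,n}^{+}$ in this region. By the previous paragraph $z_n$ is a conjugate of $q_n$, and $|z_n-w|\le \varepsilon+2\sin(\delta/2)$. Letting $\varepsilon,\delta\to0$ produces conjugates of Pisot numbers arbitrarily close to $w$, so $w$ is a limit point of the set of conjugates of Pisot numbers. As $w$ was an arbitrary point of the unit circle, the circle is contained in that set of limit points.

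The step I expect to be the main obstacle is this factorization argument: guaranteeing that the roots furnished by the Remark are conjugates of an honest Pisot number rather than artefacts of a spurious factor of $\Psi_{B,1,n}^{+}$. The Kronecker/Rouch\'e combination above is what makes this rigorous, and it also pins down that $q_n$ is genuinely Pisot (all of its conjugates lie strictly inside the disk, the only roots on the circle being the cyclotomic ones split off into $R_n$); alternatively this is immediate from Amara's description of regular Pisot numbers in Theorem~\ref{Amara}. The remaining details---verifying the one-root-outside count for an explicit threshold $n_0$ and checking that the chosen annular sector is genuinely nonempty---are routine.
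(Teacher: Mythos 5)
Your route is genuinely different from the paper's: the paper proves this corollary in one line, by combining Theorem~\ref{thm:4.2} with the Erd\H{o}s--Tur\'an theorem \cite{ErdosTuran50} applied to \emph{irreducible} Pisot minimal polynomials (e.g.\ the multinacci polynomials $\Psi_r$, whose non-dominant roots have moduli tending to $1$ by Theorem~\ref{thm:4.2} and whose arguments equidistribute by Erd\H{o}s--Tur\'an). Because those polynomials are themselves minimal polynomials of Pisot numbers, the question of which roots are genuine Galois conjugates never arises. You instead work with the reducible defining polynomials $\Psi_{B,1,n}^{+}(x)=(x^2-x-1)x^n+1$ and the convex-hull theorem with its Remark, which forces you to decide which roots belong to the Pisot number $q_n$ --- and that is exactly where your argument has a genuine gap, at the step you flag and then dismiss as routine.

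The gap is this: Theorem~\ref{thm:rouche}\eqref{limits:1}--\eqref{limits:2} does \emph{not} give ``exactly one root outside the closed unit disk.'' Those statements only constrain sequences of roots converging to a limit $q$ with $|q|>1$ strictly; they say nothing about roots of modulus greater than $1$ whose moduli tend to $1$ from above as $n\to\infty$, and nothing you cite excludes such roots. Even one of them is fatal to your argument: the cofactor $R_n$ would then have a root outside the closed unit disk, Kronecker's theorem would not apply to $R_n$, and the near-circle roots supplied by the Remark could be roots of $R_n$ rather than conjugates of $q_n$. Note that the obvious fix --- Rouch\'e on the unit circle --- fails here, since $|z^2-z-1|^2 = 3-2\cos(2\arg z) \geq 1 = |g(z)|$ on $|z|=1$ with equality at $z=\pm 1$, and $z=1$ really is a root of $\Psi_{B,1,n}^{+}$. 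There is also a secondary flaw: even granted the root count, Kronecker applied to $R_n$ does not ``pin down that $q_n$ is genuinely Pisot,'' since it cannot exclude $P_n$ having roots \emph{on} the unit circle (the Salem configuration). That part is repairable: an irreducible integer polynomial with a root of modulus $1$ must be reciprocal, and a reciprocal $P_n$ with a non-real root strictly inside the disk would have two roots strictly outside, contradicting your root count; but you would have to say this. The cleanest repair of the main gap is not via Theorem~\ref{thm:rouche} at all, but via what the paper (following Amara \cite{Amara66}, Theorem~\ref{Amara}) takes as given: $\Psi_{B,1,n}^{+}$ is the product of the minimal polynomial of the regular Pisot number $q_n$ with cyclotomic factors --- concretely, $\Psi_{B,1,n}^{+}(x)=(x-1)\bigl(x^{n+1}-x^{n-1}-x^{n-2}-\cdots-x-1\bigr)$, the second factor being a classical Pisot polynomial. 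With that granted, your sector argument does close the proof; without it, the proof as written does not go through.
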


\begin{proof}
This follows from Theorem~\ref{thm:4.2} and \cite[Theorem 1]{ErdosTuran50}.
\end{proof}

In fact, this is already true for the multinacci numbers, i.e., the roots of $x^n-x^{n-1}-\dots -x-1$.

\section{Pisot numbers in $(m,m+1)$}
\label{sec:m}

Recall that Boyd \cite{Boyd84,Boyd85} has given an algorithm that finds all Pisot
    numbers in an interval, where, in the case of limit points, the algorithm
    can detect the limit points and compensate for them.
Using this algorithm, combined with the results above
    for regular Pisot numbers, we can computationally show

\begin{theorem}
\label{thm:1.933}
All Pisot numbers $q \in (1, 1.933]$ satisfy Conjecture~\ref{conj:exact}.
\end{theorem}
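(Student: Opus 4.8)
The plan is to combine the structural classification of Amara (Theorem~\ref{Amara}) with the direct verification for regular Pisot numbers (Theorem~\ref{thm:m=1}) and a finite machine computation via Boyd's algorithm. The starting observation is that the upper endpoint $1.933$ has been chosen deliberately: since $\phi_4 \approx 1.933185 > 1.933$, the only limit points of Pisot numbers lying in $(1,1.933]$ are the finitely many points $\phi_1 = \psi_1,\ \phi_2,\ \psi_2,\ \phi_3,\ \chi,\ \psi_3$. This is precisely what makes a finite argument possible, since Amara's description is governed by these accumulation points.

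First I would split the Pisot numbers in $(1,1.933]$ into regular and irregular ones. By Theorem~\ref{thm:m=1}, every regular Pisot number $q \in (1,2)$ already satisfies $|q'| \ge c_1$, so the entire infinite collection of regular Pisot numbers accumulating at the six limit points above is dispatched immediately, with no further computation required.

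Next I would treat the irregular Pisot numbers. Amara's theorem guarantees that for any $\varepsilon > 0$ there are only finitely many irregular Pisot numbers in $(1, 2-\varepsilon)$; taking $\varepsilon = 0.067$ so that $2 - \varepsilon = 1.933$, we conclude that $(1, 1.933]$ contains only finitely many irregular Pisot numbers. I would then run Boyd's algorithm on the interval $(1,1.933]$. Because this algorithm detects and compensates for the limit points $\phi_1,\dots,\psi_3$, it returns a complete and explicit finite list of all Pisot numbers in the interval that are not already captured by the regular families of Table~\ref{tab:regular}. For each such number I would compute its minimal polynomial, factor it to obtain the Galois conjugates, and verify directly that $|q'| \ge c_1 = \frac{\sqrt{5}-1}{2}$ for every conjugate $q'$.

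The main obstacle is certifying completeness of the enumeration: one must confirm that the union of the regular families handled by Theorem~\ref{thm:m=1} together with Boyd's finite output genuinely exhausts every Pisot number in $(1,1.933]$, with nothing lost near the accumulation points. Since the limit points $\phi_r,\psi_r,\chi$ are approached only from within the parametrized families of Table~\ref{tab:regular}, the delicate bookkeeping lies in matching the neighbourhood in which Amara guarantees regularity against the interval on which Boyd's algorithm supplies its rigorous finite list, so that the two descriptions overlap and cover $(1,1.933]$ without a gap. Once this is secured, the remaining work is a routine, if lengthy, root computation for finitely many polynomials.
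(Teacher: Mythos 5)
Your proposal matches the paper's proof: the paper likewise handles all regular Pisot numbers via Theorem~\ref{thm:m=1} and uses Boyd's algorithm to enumerate the finitely many irregular Pisot numbers below $1.933$ (there are 760 of them), checking each explicitly against Conjecture~\ref{conj:exact}. Your additional remarks on why $1.933$ was chosen (just below $\phi_4$) and on the completeness of the enumeration correctly reflect the paper's own discussion following the theorem.
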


\begin{proof}
Using the algorithms above, we note that there are 760 irregular Pisot numbers
    less than 1.933.
These can be found at \cite{Hare}.
We explicitly check that they all satisfy Conjecture~\ref{conj:exact}.
We see from Theorem~\ref{thm:m=1} that all regular Pisot numbers
    less than 1.933 satisfy Conjecture~\ref{conj:exact}.
This completes the proof.
\end{proof}

We choose $1.933$ due to the limit point at
    $\phi_4 \approx 1.933184982$, the root in $(1,2)$ of $x^5-2 x^4+x-1$.
Although each limit point can be handled by the algorithm of
    \cite{Boyd84, Boyd85}, the handling of such limit points gets more
    complicated the closer we get to $2$.

Alternately, we can look at a finite (albeit large) set of Pisot numbers
    in $(1,2)$, but with bounded degree.

\begin{theorem}
\label{thm:150}
All Pisot numbers $q \in (1, 2)$ of degree at most 150
    satisfy Conjecture~\ref{conj:exact}.
\end{theorem}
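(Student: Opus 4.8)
The plan is to reduce the statement to a finite, certified computation, exactly in the spirit of the proof of Theorem~\ref{thm:1.933}, but organised around a degree bound rather than an interval endpoint. The first observation is that for each fixed degree $d$ the set of Pisot numbers in $(1,2)$ of degree $d$ is finite: writing the minimal polynomial as $P(x)=x^d+a_{d-1}x^{d-1}+\dots+a_0$, each coefficient is, up to sign, an elementary symmetric function $e_k$ of the roots $q,\alpha_1,\dots,\alpha_{d-1}$ with $q<2$ and $|\alpha_i|<1$, so $|a_{d-k}|\le 2\binom{d-1}{k-1}+\binom{d-1}{k}$. Taking the union over $d\le 150$ therefore yields a finite list to inspect.

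The key advantage of bounding the degree, as opposed to bounding the interval, is that it defuses the accumulation of Pisot numbers at the limit points $\phi_r,\psi_r,\chi$ of Amara's Theorem~\ref{Amara}. Near any such limit point the approaching regular Pisot numbers are roots of $f(x)x^n+g(x)$ and hence have degree growing with $n$; imposing $d\le 150$ keeps only finitely many of them in any neighbourhood, so the search terminates without the delicate limit-point handling that forced the cut-off at $1.933$ in Theorem~\ref{thm:1.933}. Concretely, I would run Boyd's algorithm \cite{Boyd84,Boyd85} on the full interval $(1,2)$, truncating every branch once its associated minimal polynomial exceeds degree $150$, to produce the complete list of candidate Pisot numbers.

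Having produced this list, the remaining task is, for each minimal polynomial $P$, to verify that every root $q'$ satisfies $|q'|\ge c_1=\frac{\sqrt5-1}{2}$. By Theorem~\ref{thm:m=1} all regular Pisot numbers of degree at most $150$ already satisfy Conjecture~\ref{conj:exact}, so only the irregular candidates require direct checking; for these I would isolate all roots using certified (interval or ball) arithmetic at sufficient precision. The check is decisive because no irregular $P$ can have a root of modulus exactly $c_1$: that would force the irreducible factor $x^2-x-1$ to divide $P$, which happens only for $P=x^2-x-1$, the golden ratio itself, whose conjugate has modulus exactly $c_1$ and thus satisfies the bound. For every other $P$ the roots are bounded away from the circle $|z|=c_1$, so a finite-precision enclosure suffices to confirm $|q'|>c_1$ rigorously; equivalently, since $c_1$ is a quadratic irrational, one can decide whether $P$ has a root in $|z|<c_1$ exactly by applying the Schur--Cohn test to $P(c_1 y)$ over $\mathbb{Z}[\sqrt5]$.

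The main obstacle is not any single inequality but guaranteeing completeness and rigour of the enumeration at scale: one must be certain that the degree-truncated Boyd search misses no Pisot number of degree $\le 150$, and that the root-modulus certification is genuinely rigorous for the potentially very large number of irregular candidates that survive the truncation. I expect the combinatorial size of the search near the limit points closest to $2$, where many Pisot numbers of degree up to $150$ cluster, to be the dominant practical difficulty, and the correctness of the termination criterion in Boyd's algorithm under a degree bound to be the point most in need of care.
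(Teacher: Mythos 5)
Your proposal takes essentially the same route as the paper: the paper also treats this as a finite computational verification, first observing that every Pisot number $q \in (1, 1/c_1)$ trivially satisfies the conjecture (each conjugate $q'$ satisfies $|q'| \geq \prod_i |q_i'| = |N(q)|/q \geq 1/q > c_1$), and then enumerating and explicitly checking the 59876 Pisot numbers of degree at most 150 in $[1/c_1, 2)$, with the list posted at \cite{Hare}. One caveat on your write-up: the claim that a root of modulus exactly $c_1$ would force $x^2-x-1$ to divide $P$ is false for non-real roots (for instance $c_1 i$ is an algebraic integer of modulus $c_1$ with irreducible minimal polynomial $x^4+3x^2+1$), but this does not damage your argument, since the exact Schur--Cohn test over $\mathbb{Z}[\sqrt{5}]$ that you propose decides the non-strict inequality $|q'| \geq c_1$ rigorously without requiring the roots to be bounded away from the circle $|z| = c_1$.
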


We noticed that Pisot numbers in $(1, 1/c_1)$ necessarly satisfy
    Conjecture~\ref{conj:exact}.
There are 59876 Pisot numbers in $[1/c_1, 2)$ of degree at most 150,
    so this is strong heuristic evidence.

For each $m$ and $N$ in Table~\ref{tab:counter} we have
   listed the number of polynomials with a unit Pisot root
   in $(1/c_m, m+1)$ of degree at most $N$.
We note that if the Pisot number is in $(m, 1/c_m)$ or a non-unit Pisot,
   then we necessarily have that all conjugates are greater than $c_m$.
A complete list of these polynomials is provided at
    \cite{Hare}.
We also have listed the Pisot polynomials with the smallest
    known conjugate in modulus.
Based on this table, we make Conjecture~\ref{conj:exact}.

\begin{table}[H]
\begin{tabular}{llll}
$m$ & $N$ & Size & smallest polynomial \\
\hline
1  & 150& 59876  & $x^2-x-1$  \\
2  & 12 & 50557 & $x^4-3 x^3+x^2-2 x-1$  \\
3  & 9  & 67213  & $x^2-4x+1$ \\
4  & 7  & 18995  & $x^2-5x+1$ \\
5  & 7  & 50317  & $x^2-6x+1$ \\
6  & 6  & 15268  & $x^2-7x+1$ \\
7  & 6  & 26959  & $x^2-8x+1$ \\
8  & 5  & 4696   & $x^2-9x+1$ \\
9  & 5  & 6578   & $x^2-10x+1$ \\
10 & 5  & 8743   & $x^2-11x+1$ \\
\end{tabular}
\caption{Pisot polynonimals with smallest conjugate in modulus}
\label{tab:counter}
\end{table}

\section{Garsia Entropy}
\label{sec:Garsia}

Given $\beta\in(1,2)$, the Bernoulli convolution $\nu_{\beta}$ is the
weak$^*$ limit of the measures $\nu_{\beta,n}$ given by
\[
\nu_{\beta,n}=\sum_{a_1 \ldots a_n\in\{0,1\}^n}
\frac{1}{2^n}\delta_{\sum_{i=1}^n a_i\beta^{-i}}.
\]
These interval supported self-similar measures have been intensely studied
    since the 1930s.
The Bernoulli convolutions are known to be exact-dimensional \cite{FengHu09}.
That is
\[
\dim(\nu_\beta)=\lim_{h\to0}\frac{\log\nu_\beta(x,x+h)}{\log h}
\]
for $\nu_\beta$-a.e. $x$.

In particular, the dimension of $\nu_\beta$ equals its Hausdorff dimension.
In particular, the question of which parameters $\beta$ give rise to
measures which are singular or which have dimension less than one has
been extremely well studied. Erd\H{o}s \cite{Erdos39}
showed that Pisot numbers give
rise to singular Bernoulli convolutions, and Garsia \cite{Garsia63} showed
that $\dim(\nu_{\beta})<1$ when $\beta$ is Pisot. It remains unknown
whether there are any other parameters that give rise to singular
Bernoulli convolutions.

Recent work of Hochman \cite{Hochman} showed that for algebraic $\beta$ the dimension
of $\nu_{\beta}$ can be given explicitly in terms of the Garsia
entropy of $\beta$,
\begin{equation}\label{GarsiaDim}
  \dim(\nu_{\beta})=\min\left\{1,
  \frac{H(\beta)}{\log(\beta)}\right\}.
\end{equation}
The Garsia entropy $H(\beta)$, defined in \cite{Garsia63}, is
a quantity which measures how often different words $a_1 \ldots a_n$
give rise to the same sum $\sum_{i=1}^n a_i\beta^{-i}$.  It should be
noted that the Garsia entropy is sometimes defined after normalizing
by $\log(\beta)$.
Recall, we say $\beta$ is a height $h$ algebraic number if it is the
    root of an integer polynomial whose coefficents are bounded by
    $h$ in absolute value.
It follows from the definition that if $\beta$ is not a
height~1 algebraic integer, then $H(\beta) = \log(2)$, and
by \eqref{GarsiaDim}, $\dim(\nu_\beta) = 1$.
Very recently Varj\'u, \cite{Varju}, has shown that if $\beta$ is
non-algebraic, then $\dim(\nu_{\beta})=1.$ Thus the problem of
understanding which parameters $\beta$ give rise to Bernoulli
convolutions of dimension less than one has been reduced to one of
understanding Garsia entropy for algebraic $\beta$ of height~1.
For more information on Bernoulli convolutions, see \cite{VarjuA}.

In \cite{HKPS} it is shown that if $\beta_1 > 1$ is an algebraic number with
    real conjugate $\beta_2$ such that
   \[ \log(\beta_1)/ |\log(\beta_2)| < 0.82, \]
then the Garsia entropy of the Bernoulli convolutions of $\beta_1$ is strictly
    greater than 1, and hence the dimension of this Bernoulli
    convolution is exactly 1.
In particular, if $q \in (1,2)$ is a (non-quadratic)
    Pisot number and $q'$ is a real
    conjugate, then this shows that $\beta_1 = 1/|q'|$ satisfies these
    conditions if $|q'| > q^{-0.82}$.

\begin{conj}[Conjecture 3.5 of \cite{HKPS}] \label{conj:3.5}
If $\beta_1 \in (1,2)$, $\beta_1 \neq \frac{1+\sqrt{5}}{2}$,
     has a real conjugate $\beta_2$ such that $1/|\beta_2|$
    is a Pisot number, then $\dim(\nu_{\beta_1}) = 1$.
\end{conj}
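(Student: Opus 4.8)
The plan is to derive Conjecture~\ref{conj:3.5} from Conjecture~\ref{conj:exact} together with the Garsia--entropy criterion of \cite{HKPS} recalled above. Throughout, write $q=1/|\beta_2|$ for the Pisot number. Since $\beta_1$ and $\beta_2$ are conjugate, $\pm1/\beta_1$ is a conjugate of $q$; call it $q'$, so $q'$ is real and $|q'|=1/\beta_1\in(\tfrac12,1)$ because $\beta_1\in(1,2)$. Thus the hypotheses of Conjecture~\ref{conj:3.5} amount to: a Pisot number $q$ with a real conjugate $q'$ satisfying $\tfrac12<|q'|<1$, where the excluded value $\beta_1=\frac{1+\sqrt5}{2}$ corresponds exactly to $|q'|=c_1$, i.e. to $q=\frac{1+\sqrt5}{2}$ itself. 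First I would dispose of the non-unit case: if $q$ is a non-unit Pisot number then $1/q$, and hence $\beta_2=\pm1/q$ together with its conjugate $\beta_1$, is not an algebraic integer; by the height~1 reduction quoted above $H(\beta_1)=\log2$, and since $\beta_1<2$ equation~\eqref{GarsiaDim} gives $\dim(\nu_{\beta_1})=\min\{1,\log2/\log\beta_1\}=1$. So I may assume $q$ is a \emph{unit} Pisot number, whence $|\log\beta_2|=\log q$ and the \cite{HKPS} criterion yields $\dim(\nu_{\beta_1})=1$ as soon as
\[
\frac{\log\beta_1}{\log q}<0.82,\qquad\text{equivalently}\qquad |q'|>q^{-0.82}.
\]
The entire problem thus reduces to establishing this one inequality.

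Next I would split on the size of $q$. Because $\beta_1<2$ we always have $|q'|>\tfrac12$, so $|q'|>q^{-0.82}$ holds automatically once $q^{-0.82}\le\tfrac12$, i.e. for $q\ge 2^{1/0.82}\approx2.33$; in particular every case with $q\in(m,m+1)$, $m\ge3$, is settled immediately. For $q\in(m,m+1)$ with $m=1$ I would invoke Conjecture~\ref{conj:exact} in the form $|q'|\ge c_1\approx0.618$, which gives the inequality for all $q>1.798$ (and the boundary case $|q'|=c_1$ is precisely the excluded golden ratio, so strictness is available exactly where it is needed). This leaves two narrow windows uncovered by the crude bounds $\max(c_m,\tfrac12)$: the window $q\in(1,1.798)$ inside $(1,2)$, and the window $q\in(2,2.33)$ inside $(2,3)$.

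For the first window I would use that $(1,1.798)\subset(1,1.933]$, where Theorem~\ref{thm:1.933} furnishes a complete description of the Pisot numbers: the finitely many irregular ones are checked directly, and along each regular family Theorem~\ref{thm:4.2} controls the limiting value of the minimal conjugate modulus, hence of $\beta_1=1/|q'|$, while $q$ tends to a limit point $\phi_r,\psi_r$ or $\chi$ bounded away from $\frac{1+\sqrt5}{2}$. The constraint $|q'|>\tfrac12$ is crucial here, as it discards all families whose minimal conjugate tends to a value below $\tfrac12$ (for these $\beta_1>2$ and they are irrelevant); for the families that survive one has $|q'|\to\kappa\ge\kappa_{A,3}\approx0.755$, giving $\log\beta_1/\log q$ comfortably below $0.82$. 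The upshot is that a \emph{small} conjugate modulus $|q'|$ can only coexist with a \emph{small} Pisot number $q$ in the excluded golden-ratio configuration --- a joint constraint on the pair $(q,|q'|)$ that goes well beyond the pointwise bound of Conjecture~\ref{conj:exact}.

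The hard part will be twofold. First, the whole argument is conditional on Conjecture~\ref{conj:exact}, which remains open for general $q\in(1,2)$; so at best one obtains a conditional theorem, or an unconditional one only on the verified range. Second, and more seriously, the window $q\in(2,2.33)$ is not covered by any tool in the paper: Theorem~\ref{thm:1.933} stops at $1.933$, the bound $c_2<\tfrac12$ is useless there, the simple-Parry bound of Theorem~\ref{thm:parry} gives only $\tfrac{\sqrt8-2}{2}\approx0.414<\tfrac12$, and no Amara-type classification of Pisot numbers in $(2,3)$ (with the accompanying limit-point control of Theorem~\ref{thm:4.2}) is available. Closing this window therefore requires the sharper joint bound ``$|q'|$ near $\tfrac12$ forces $q$ near $\frac{1+\sqrt5}{2}$'' to be proved for unit Pisot numbers in $(2,2.33)$ directly, either by extending the regular/irregular analysis to the interval $(2,3)$ or by a finite computation handling the relevant limit points --- this is the genuine obstacle, and I would expect most of the real work to concentrate there.
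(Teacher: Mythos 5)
Your proposal is, at its core, the same argument as the paper's proof of Theorem~\ref{thm:exact} (like the paper, you prove the implication from Conjecture~\ref{conj:exact}, not Conjecture~\ref{conj:3.5} outright): reduce everything to the HKPS criterion $\log\beta_1/|\log\beta_2|<0.82$, use Conjecture~\ref{conj:exact} to dispatch $q=1/|\beta_2|$ near the top of $(1,2)$ (both of you exploit $\log(1/c_1)/\log 1.8\approx 0.818<0.82$), and handle smaller $q$ via Amara's classification, checking the finitely many irregular Pisot numbers computationally and the regular families by root-location estimates. Two caveats on your version of the last step. First, you lean on Theorem~\ref{thm:4.2}, which only controls \emph{limits} along a family; to conclude anything about every member you need bounds valid for each finite $n$, which is what the paper supplies via explicit Rouch\'e estimates (e.g.\ for $\Phi_{B,1,n}^{+}$: Pisot root $>1.6$ and all conjugates $>0.7$ for $n\ge 7$, giving ratio $\approx 0.759$). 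Second, $\kappa_{A,3}\approx 0.755$ is irrelevant for the families whose limit points lie below $1.8$ (there the limiting minimal modulus is $1$); what it flags instead are the few regular Pisot numbers below $1.8$ sitting in families with limit points above $1.8$, which the paper handles as five exceptional cases (Table~\ref{tab:reg exception}). Both points are repairable within your strategy.

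The genuinely different part of your proposal is the case $q\ge 2$, and here you are more careful than the paper: its three-case plan covers only $1/|\beta_2|\in(1,2)$, yet nothing in Conjecture~\ref{conj:3.5} forces the Pisot number into $(1,2)$. For instance $x^3-2x^2-x+1$ has Pisot root $q\approx 2.247$ with real conjugate $\approx 0.555$, so $\beta_1\approx 1.80\in(1,2)$ is an admissible anti-Pisot number with $1/|\beta_2|>2$. Your disposal of non-unit $q$ (then $\beta_1$ is not an algebraic integer, so $H(\beta_1)=\log 2$ and $\dim(\nu_{\beta_1})=1$) and of $q\ge 2^{1/0.82}\approx 2.33$ (automatic from $|q'|>1/2$) is correct, and your diagnosis of the remaining window is accurate: for $q\in(2,2.33)$, Conjecture~\ref{conj:exact} only gives $|q'|\ge c_2\approx 0.36<1/2$, so it does not combine with the $0.82$-criterion, and nothing in the paper rules out a unit Pisot number there with a real conjugate of modulus in $(1/2,q^{-0.82})$. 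So your proposal is incomplete, but the gap you name is present --- unacknowledged --- in the paper's own proof; closing it would require extending the regular/irregular analysis or the computations of Section~\ref{sec:m} to that window.
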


\begin{remark}
Numbers $\beta_1$, as described in Conjecture \ref{conj:3.5}, are
    are often called {\em anti-Pisot}, see, e.g., \cite{SiSo11}.
\end{remark}

\begin{theorem}
\label{thm:exact}
Conjecture~\ref{conj:exact} implies Conjecture~\ref{conj:3.5}.
\end{theorem}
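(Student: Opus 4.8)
The plan is to translate the hypothesis of Conjecture~\ref{conj:3.5} into a statement about a conjugate of a Pisot number and then feed it into the Garsia entropy criterion of \cite{HKPS}. Suppose $\beta_1\in(1,2)$, $\beta_1\neq\frac{1+\sqrt5}2$, has a real conjugate $\beta_2$ with $q:=1/|\beta_2|$ Pisot. Since $1/\beta_1$ and $1/\beta_2=\pm q$ are Galois conjugate, $1/\beta_1=\pm q'$ for some conjugate $q'$ of $q$; as $\beta_1\in(1,2)$ we have $|q'|=1/\beta_1\in(1/2,1)$, so $q'$ is a real non-dominant conjugate of $q$ and $\beta_1=1/|q'|$. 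With $|\log\beta_2|=\log q$ and $\log\beta_1=-\log|q'|$, the criterion of \cite{HKPS} gives $\dim(\nu_{\beta_1})=1$ as soon as
\[ |q'|>q^{-0.82}. \]
Thus the theorem reduces to deducing this inequality (or $\dim(\nu_{\beta_1})=1$ by other means) from Conjecture~\ref{conj:exact}.

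First I would dispose of the easy cases. If $q\ge 2^{1/0.82}\approx 2.33$ then, since $\beta_1<2$, we get $\log\beta_1<\log2\le 0.82\log q$, so the criterion holds; in particular this settles every $q\in(m,m+1)$ with $m\ge3$. Next, the entropy dichotomy recalled after \eqref{GarsiaDim} removes two further families at once: if $q$ is a non-unit then $\beta_1=1/|q'|$ is not an algebraic integer, while if some conjugate of $q$ has modulus $\le 1/2$ then (for unit $q$) $\beta_1$ has a conjugate of modulus $\ge 2$ and hence, by the elementary bound that every root of a $\{-1,0,1\}$-polynomial has modulus $<2$, $\beta_1$ is not a height~$1$ algebraic integer; in either case $H(\beta_1)=\log 2$ and $\dim(\nu_{\beta_1})=1$. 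Theorem~\ref{thm:complex} moreover shows that at most one conjugate of $q$ can fall below $1/(m+1/2)$, which localizes this step to a single exceptional (real) conjugate.

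What remains are unit Pisot numbers $q<2.33$ all of whose conjugates have modulus $>1/2$. For $q\in(1,2)$ Conjecture~\ref{conj:exact} gives $|q'|\ge c_1$, hence $\beta_1\le 1/c_1=\frac{1+\sqrt5}2$, and since $\beta_1\neq\frac{1+\sqrt5}2$ we may take $\beta_1<1/c_1$; the inequality $|q'|>q^{-0.82}$ then holds automatically once $q>(1/c_1)^{1/0.82}\approx1.80$. This leaves two genuinely bounded windows: $q\in(1,1.80]$ (the case $m=1$) and $q\in(2,2.33)$ (the case $m=2$), in which the conjectured modulus $c_m$ no longer beats $q^{-0.82}$ (indeed $c_2<1/2$). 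For $m=1$ I would use the structural information of Theorem~\ref{thm:4.2}: near the limit points in this window the non-Pisot conjugates of the regular families tend to the unit circle (or to the constants $\kappa_{A,r},\kappa_{B,r}$), which exceed $q^{-0.82}$ in the relevant range, so the criterion holds for all large parameters in each family; the finitely many small-parameter regular numbers, the explicitly enumerated irregular numbers of Theorem~\ref{thm:1.933}, and the data of Table~\ref{tab:counter} are then checked individually.

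The main obstacle is precisely these two low windows. The reduction above is clean and \cite{HKPS} handles everything once $q$ is bounded away from the left endpoint of its interval, but for $q$ close to $m$ the entropy criterion is too weak and $c_m$ does not suffice, so one must verify $|q'|>q^{-0.82}$ directly, relying on the complete description of the regular families and on the finiteness/computation results rather than on Conjecture~\ref{conj:exact} alone. Making the $m=2$ window unconditional, as opposed to verified only up to bounded degree, is the most delicate point, since the excerpt develops the regular-number machinery only in $(1,2)$.
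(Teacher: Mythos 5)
Where your argument overlaps with the paper's, it is essentially the same proof. The paper reduces to the same criterion $|q'|>q^{-0.82}$ for $q=1/|\beta_2|$, splits at $1.8$, handles $q\in[1.8,2)$ exactly as you do (Conjecture~\ref{conj:exact} gives $\beta_1\le 1/c_1$, excluded endpoint aside, and $\log\bigl(\tfrac{1+\sqrt5}{2}\bigr)/\log(1.8)\approx 0.818<0.82$), and handles $q<1.8$ unconditionally via the $760$ irregular Pisot numbers below $1.933$ plus family-by-family arguments for the regular ones. One caveat on your $m=1$ step: Theorem~\ref{thm:4.2} is purely a limit statement, and the constants $\kappa_{A,r},\kappa_{B,r}$ you invoke concern families whose limit points lie \emph{above} $1.8$ (for the $r\le 2$ families that actually accumulate below $1.8$ the corresponding limits are $1$ or degenerate); to know which ``small parameters'' must be checked by hand you need the effective Rouch\'e estimates the paper actually performs (e.g.\ for $\Phi_{B,1,n}^{+}$: Pisot root in $[1.6,1.64]$ and all conjugates above $0.7$ for $n\ge 7$, ratio $\le\log(1/0.7)/\log(1.6)\approx 0.759$), together with the five exceptional regular numbers of Table~\ref{tab:reg exception}. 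On the credit side, your two extra reductions are correct, are not in the paper, and are cleaner than what the paper does for those subcases: a non-unit $q$ makes $\beta_1$ a non-integer algebraic number, and a conjugate of $q$ of modulus $\le 1/2$ gives $\beta_1$ a conjugate of modulus $\ge 2$, so $\beta_1$ is not height~$1$; either way $H(\beta_1)=\log 2$ and $\dim(\nu_{\beta_1})=1$ by \eqref{GarsiaDim}.

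The genuine divergence is the window $q\in[2,2^{1/0.82})$. You are right that the hypothesis of Conjecture~\ref{conj:3.5}, as quoted, does not force $1/|\beta_2|<2$: for instance $x^3-2x^2-x+1$ has a Pisot root $q\approx 2.247$ with a positive real conjugate $\approx 0.555$, so $\beta_1\approx 1.80$ satisfies the hypothesis with $q>2$. And you are right that the surviving case after your reductions --- a unit Pisot $q\in(2,2.33)$ with all conjugates of modulus $>1/2$ and a positive real conjugate in $(1/2,q^{-0.82})$ --- cannot be settled by Conjecture~\ref{conj:exact} via the $0.82$ criterion, precisely because $c_2<1/2$. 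So your proof is incomplete there, as you admit. You should know, however, that the paper's proof is incomplete in exactly the same place and is less candid about it: its three cases cover only $1/|\beta_2|\in(1,2)$, it never records your observation that $q\ge 2^{1/0.82}$ is automatic, and it never mentions $q\ge 2$ at all. Your acknowledged gap is therefore not a defect of your route relative to the paper's; it is a lacuna in the paper's own proof that your more careful case analysis exposes. Closing it needs input beyond Conjecture~\ref{conj:exact}: either a proof that no Pisot number in $(2,2.33)$ has a real conjugate in $(1/2,q^{-0.82})$ (say, by a Boyd-type analysis of Pisot numbers in $(2,3)$), or a separate argument that $\dim(\nu_{\beta_1})=1$ for such $\beta_1$.
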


\begin{remark}
What is proven here is slightly stronger than this.
We show that
\begin{itemize}
\item If $\beta_1 \in (1,2)$ has a conjugate $\beta_2$ such that
     $1/|\beta_2| \in (1, 1.8)$ is a Pisot number, then $\dim(\nu_{\beta_1}) = 1$.
\item If $\beta_1 \in (1,2)$ has a conjugate $\beta_2$ such that
     $1/|\beta_2| \in (1, 2)$, then Conjecture~\ref{conj:exact} implies $\beta_1 \in (1, c_1]$.
\item If $\beta_1 \in (1,c_1)$ has a conjugate $\beta_2$ such that
     $1/|\beta_2| \in (1.8, 2)$ is a Pisot number, then $\dim(\nu_{\beta_1}) = 1$.
\end{itemize}
\end{remark}

\begin{proof}
Let $\beta_1$ be an anti-Pisot number with conjugate $\beta_2$.
The plan is to show that
\begin{enumerate}
\item If $1/|\beta_2|$ is an irregular Pisot numbers less than 1.8, then $\dim(\nu_{\beta_1})=1$.
\label{check:1}
\item If $1/|\beta_2| \in [1.8, 2)$, then Conjecture~\ref{conj:exact} implies that $\dim(\nu_{\beta_1}) = 1$.
\label{check:3}
\item If $1/|\beta_2|$ is a regular Pisot number less than 1.8, then $\dim(\nu_{\beta_1}) = 1$.
\label{check:2}
\end{enumerate}

Part \eqref{check:1} was in fact verfied for the 760 irregular Pisot numbers
    less than 1.933, as these irregular Pisot numbers were previously computed for
    the proof of Theorem~\ref{thm:1.933}.

Part \eqref{check:3} follows by noticing that
    $\frac{\log\left(\frac{1+\sqrt{5}}{2}\right)}{\log (1.8)} \approx 0.818$
and hence Conjecture~\ref{conj:3.5} will hold for all
    for $|\beta_2| > c_1$ and all $\beta_1 > 1.8$
    subject to Conjecture~\ref{conj:exact}.

To prove Part \eqref{check:2}, we need to study the regular Pisot numbers
    less than 1.8.
There are only two limit points of Pisot numbers less than $1.8$.
    namely $\phi_1 = \psi_1$ and $\phi_2$.
Hence, the regular Pisot numbers less than $1.8$ are of one of the
    following forms:
\begin{enumerate}
\item Either $\phi_1=\psi_1$ or $\phi_2$.
\label{pisot:1}
\item
In a regular family as described in Table~\ref{tab:regular} approaching
    one of the two limit points less than 1.8.
That is, the Pisot root less than 1.8
    of one of $\Phi_{A,1,n}^{\pm}, \Phi_{B,1,n}^{\pm}, \Phi_{C,1,n}^{\pm}, \Psi_{A,1,n}^{\pm}, \Psi_{B,1,n}^{\pm},
                          \Phi_{A,2,n}^{\pm}, \Phi_{B,2,n}^{\pm}$ or $\Phi_{C,2,n}^{\pm}$.
\label{pisot:2}
\item A regular Pisot number less than $1.8$ but part of a family
    approaching a limit point greater than $1.8$.
That is, the Pisot root less than 1.8 of one of
$\Phi_{A, r, n}^\pm,
\Phi_{B, r, n}^\pm,
\Phi_{C, r, n}^\pm$ for $r \geq 3$,
$\Phi_{A, r, n}^\pm,
\Phi_{B, r, n}^\pm$ for $r \geq 2$, or
$\X_{A, n}^\pm$ or $\X_{B, n}^\pm$.
\label{pisot:4}
\end{enumerate}

Case \eqref{pisot:1} is easy,
    and we just explicitly check that these both have the
    desired property.

For Case \eqref{pisot:2} we
    can use an explicit version of Theorem~\ref{thm:rouche} Part~\eqref{limits:3}
    to find a $N$ such that for all $n \geq N$ we have the roots of
    $f(x) x^n + g(x)$ satisfy the desired property.
We will do the case of $\Phi_{B, 1, n}^{+}$ only,
    as the rest are similar.

We see that $\Phi_{B,1,n}^{+}(x) = (x^2-x-1) x^n  + 1$.
This has a limit of $\phi_1 \approx 1.618$.
We notice that $1.6^2 -1.6 -1 = -0.04$ and $1.64^2 - 1.64^2 -1 =
    0.0496$.
We have for $n \geq 7$ that $1.6^n (1.6^2-1.6-1) + 1< 0$ and
                            $1.64^n (1.64^2-1.64-1) + 1 > 2$.

Hence for $n \geq 7$ we have the Pisot root of $\Phi_{B,1,n}^+$
    is in $[1.6, 1.64]$.

Consider the circle $C = \{z: |z| = 0.7 \}$.
Let $f_n(x) = x^n (x^2-x-1)$ and $g(x) = 1$.
Evaluating $f_n(x) = (x^2-x-1)x^n$ on this circle, we see that
    $|(x^2-x-1) x^n| \leq |0.7^2 + 0.7 + 1| 0.7^n = 2.19 \cdot 0.7^n$.
We see for $n \geq 3$ that $|x^n (x^2-x-1)| < 1 = g(x)$ on $C$.
As the polynomial $g$ has no roots, it has no roots inside $C$.
Hence $f_n(x) + g(x) = x^n(x^2-x-1) + 1$ has no roots inside of $C$ for
    $n \geq 3$.

Combining these together, we see that the Pisot root of $\Phi_{B,1,n}^+$
    is greater than 1.6, and all of its conjugates are greater than 0.7.
This implies that $\frac{|\log q'|}{\log(q)} \leq \frac{|\log(0.7)|}{\log(1.6)} \approx 0.7588$.
Hence, we have that for all $n \geq 7$ that the roots of $\Phi_{B,q,n}$ satisfy
    Conjecture~\ref{conj:3.5}.

Consider Case \eqref{pisot:4}.
Although there are a multiple infinite families of regular Pisot numbers
    corresponding to the limit points less than 1.8,
    there are only 5 regular Pisot numbers
    less than 1.8 that are not part of one of these infinite families.
\begin{table}
\begin{tabular}{ll}
Minimal polynomial & root \\
\hline
$x^6-x^5-x^4-x-1$& 1.743700166 \\
$x^7-x^6-x^5-x^3-1$& 1.774520059 \\
$x^7-2 x^6+2 x^4-2 x^3+x-1$& 1.683468801 \\
$x^7-x^6-x^5-x^4+x^3-1$& 1.747457424 \\
$x^7-2 x^6+x^5-2 x^4+2 x^3-x^2+x-1$& 1.790222867
\end{tabular}
\caption{Exceptional regular Pisot numbers}
\label{tab:reg exception}
\end{table}
Only one of these has a real root less than 1, and all of them
    satisfy Conjecture~\ref{conj:3.5}.
\end{proof}

\section{Conclusions and open questions}

The case for which Conjecture~\ref{conj:exact} remains unproven for $q\in(1,2)$ is $q$ being an irregular Pisot number in $(1.933,2)$ of degree at least 151 and not a simple Parry number. As mentioned above, it is not known whether there are finitely many such numbers.

We have shown that there are convergent sequences of regular Pisot numbers for which the sequence of their conjugates does not tend to 1 in modulus. This is an interesting phenomenon which requires further investigation. Most of these conjugates will tend to the unit circle and their arguments are distributed in a nice way, see \cite{Bilu88, Bilu97, ErdosTuran50}.

\end{document}